\documentclass[11pt]{article}

\usepackage[a4paper,margin=1in]{geometry}
\usepackage{amsmath,amssymb,amsthm,bm}
\usepackage{booktabs}
\usepackage[title]{appendix}%
\usepackage{graphicx}
\usepackage{hyperref}
\usepackage[numbers,sort&compress]{natbib}

\hypersetup{
  colorlinks=true,
  linkcolor=blue,
  citecolor=blue,
  urlcolor=blue
}
\newtheorem{lemma}{Lemma}

\newtheorem{theorem}{Theorem}
\newtheorem{assumption}{Assumption}
\newtheorem{remark}{Remark}
\newcounter{algorithm}

\newcommand{\OmegaD}{\Omega}
\newcommand{\curl}{\nabla \times}
\newcommand{\diver}{\nabla \cdot}
\newcommand{\bn}{\bm{\mathrm{n}}}
\newcommand{\E}{\bm{\mathrm{E}}}
\newcommand{\HH}{\bm{\mathrm{H}}}
\newcommand{\uu}{\bm{\mathrm{u}}}
\newcommand{\hh}{\bm{\mathrm{h}}}
\newcommand{\ff}{\bm{\mathrm{f}}}
\newcommand{\DD}{\bm{\mathrm{D}}}
\newcommand{\BB}{\bm{\mathrm{B}}}
\newcommand{\vv}{\bm{\mathrm{v}}}
\newcommand{\dt}{\tau}
\newcommand{\Eh}{\bm{\mathbb E}_h}
\newcommand{\Hh}{\bm{\mathbb H}_h}
\newcommand{\Uh}{\bm{\mathbb U}_h}
\newcommand{\Wh}{\mathbb W_h}
\newcommand{\Ph}{\mathbb P_h}

\newcommand{\Ec}{\bm{\mathbb E}}
\newcommand{\Hc}{\bm{\mathbb H}}
\newcommand{\Uc}{\bm{\mathbb U}}
\newcommand{\Wc}{\mathbb W}
\newcommand{\Pc}{\mathbb P}

\newcommand{\lnorm}[1]{\| #1 \|_{L^2}}
\newcommand{\hnorm}[1]{\| #1 \|_{H^1}}
\newcommand{\ip}[2]{(#1,#2)}

\title{Iterative Decoupling Methods for a Total-Pressure Formulation of Quasi-Static Electroporoelasticity}
\author{
Huipeng Gu\\
{\small Shenzhen University of Information Technology, Shenzhen, Guangdong, P.R. China}\\[0.6em]
Mingchao Cai\\
{\small Department of Mathematics, Morgan State University, Baltimore, Maryland, USA}\\[0.6em]
Jingzhi Li\\
{\small Department of Mathematics, Southern University of Science and Technology,}\\
{\small Shenzhen, Guangdong, P.R. China}
}
\date{}

\begin{document}
\maketitle

\begin{abstract}
Quasi-static electroporoelasticity couples Maxwell's equations with Biot's poroelasticity through electrokinetic coupling between the electric field and the pressure gradient.
By introducing the total pressure, the electroporoelasticity equations are reformulated as a five-field system to address poroelastic locking in the nearly incompressible regime.
For the resulting five-field system, a monolithic weak formulation is derived, together with a continuous stability estimate.
A second-order backward differentiation formula (BDF2) time discretization and a mixed finite- element spatial discretization are then introduced, yieling to a fully discrete monolithic scheme.
Building on this scheme, we develop an iterative decoupling method that alternates between an electromagnetic subproblem and a poroelastic subproblem, and prove its geometric convergence to the monolithic solution with an explicit mesh-independent contraction factor under the physical coupling condition.
An algebraically equivalent reduced form is also presented, in which the electromagnetic block is solved only once per time step, while the poroelastic block is solved iteratively with electric-field correction updates obtained from the pressure-gradient feedback.
Numerical experiments verify the theoretical predictions and demonstrate the locking-free performance. 
\end{abstract}

\noindent\textbf{Keywords.}
Electroporoelasticity; Total-pressure formulation; Mixed finite elements; Backward differentiation formula; Iterative decoupling.

\section{Introduction}

Electroporoelasticity describes the multiphysics process in which fluid flow caused by solid deformation interacts with electric and magnetic fields through electrokinetic coupling in porous media. Its poromechanical component originates from Biot's theory of poroelasticity \cite{biot1962mechanics}, while Pride's macroscopic equations provide a unified description of electromagnetic and mechanical coupling in porous media \cite{pride1994governing}.
Early numerical studies mainly focused on fully dynamic electroseismic models, with finite difference and finite element methods developed for coupled seismic and electromagnetic wave propagation in heterogeneous or poroviscoelastic media \cite{haines2006seismoelectric,santos2011finite,santos2012numerical};
related well-posedness results for electroseismic systems in anisotropic and inhomogeneous media can be found in \cite{mcghee2011class,mcghee2011electroseismic}.
For quasi-static electroporoelasticity, existing studies have established well-posedness and finite element error estimates for the four-field formulation \cite{hu2022numerical,hu2023existence}. 
We start from this formulation, which seeks the electric field \(\E\), magnetic field \(\HH\), displacement \(\uu\), and pore pressure \(p\) such that
\begin{subequations}
\label{eq:four}
\begin{align}
  \epsilon \partial_t \E + \sigma \E - \curl \HH - L\nabla p &= \hh,
    \label{eq:model-ampere}\\
  \mu \partial_t \HH + \curl \E &=0,
    \label{eq:model-faraday}\\
  -\lambda\nabla(\diver\uu)-G\Delta\uu+\alpha\nabla p &= \ff,
    \label{eq:model-momentum}\\
  \partial_t(c_0p+\alpha\diver\uu)-\kappa\Delta p+L\diver\E &= g.
    \label{eq:model-mass}
\end{align}
\end{subequations} 
Here, \(\epsilon\) is the electric permittivity, \(\sigma\) is the electric conductivity, \(\mu\) is the magnetic permeability, \(G\) is the shear modulus of the elastic skeleton, \(\lambda\) is the effective compressibility modulus, \(\alpha\) is the Biot--Willis coefficient, \(c_0\) is the storage coefficient, \(\kappa\) is the permeability coefficient, and \(L\) is the electrokinetic coupling coefficient between the electromagnetic and poroelastic equations.

Solving the coupled system \eqref{eq:four} monolithically can be computationally expensive, since the electromagnetic and poroelastic variables must be solved simultaneously at each time level. 
Splitting strategies are therefore introduced to reduce the computational cost. 
For example, a backward-Euler-based splitting scheme decouples the electromagnetic and poroelastic subproblems at each time level \cite{liu2025splitting}, while a physics-based multirate time-stepping strategy further reduces the overall computational expense by avoiding some costly poroelastic solves \cite{liu2026multitime}.
However, a single-pass splitting scheme generally produces a discrete solution that differs from that of the corresponding monolithic method and therefore introduces an additional splitting consistency error. 
Depending on the treatment of the coupling terms, additional stability restrictions may also arise, particularly for higher-order time discretizations \cite{cai2023some,altmann2024semi}. 
This motivates an iterative treatment similar to fixed-stress-type methods for poroelasticity, where the decoupled subproblems are repeatedly solved to recover consistency between the coupled variables \cite{kim2011stability,mikelic2013convergence,both2017robust,gu2023iterative}.
Such an approach retains the modularity of the splitting procedure while allowing the iterates to converge to the solution of the underlying monolithic discretization.

Another challenge lies in selecting finite element spaces that are compatible with the distinct mathematical structures of the electromagnetic and poroelastic subproblems.
In particular, the electromagnetic block calls for a curl-conforming approximation of the electric field \cite{monk2003finite}, whereas the poroelastic block requires a parameter-robust discretization to avoid volumetric locking in the nearly incompressible regime and to remain stable in the low-storage and low-permeability regimes \cite{lee2016robust}.
Moreover, pressure inaccuracies or spurious oscillations may propagate to the electromagnetic variables through the electrokinetic coupling and thereby affect the overall solution accuracy. 
To address these difficulties, we introduce the total pressure as an additional unknown and reformulate the mechanical equations as a generalized Stokes system \cite{oyarzua2016locking,lee2016robust}. This formulation permits stable and relatively standard finite element choices, such as a Taylor--Hood or MINI pair for the displacement and total pressure, together with a Lagrange space for the pore pressure.

The purpose of this paper is to develop and analyze stable and efficient finite element methods for the five-field reformulation. Starting from the weak formulation, we establish a continuous stability estimate and design a mixed finite element discretization using a curl-conforming N\'ed\'elec space for the electric field, a discontinuous vector polynomial space for the magnetic field, a Taylor--Hood pair for the displacement and total pressure, and a conforming Lagrange space for the pore pressure. 
Employing a BDF2 time discretization, we derive an a priori error estimate confirming second-order convergence in both space and time.
We then propose an iterative decoupling method that alternates between the electromagnetic and poroelastic subproblems and prove that it is contractive under an explicit coupling condition, thereby ensuring geometric convergence to the fully discrete monolithic solution.
Exploiting the fact that the discrete pressure gradient belongs to the N\'ed\'elec space and is curl-free, we further derive an algebraically equivalent reduced formulation. At each time level, the electromagnetic subsystem is solved once for initialization, after which only the poroelastic subsystem requires repeated solves. During the subsequent iterations, the magnetic field remains fixed, while the electric field is updated explicitly through a pressure-gradient correction.

The rest of the paper is organized as follows. 
Section~\ref{sec:2} presents the five-field reformulation and the weak formulation, together with a continuous stability estimate. 
Section~\ref{sec:3} develops fully discrete finite element approximations based on the BDF2 method, including the monolithic scheme, the iterative decoupling algorithm, and its reduced form; an a priori error estimate and the geometric convergence of the iteration are derived.
Section~\ref{sec:4} validates the proposed methods through convergence tests and iterative comparisons; a three-dimensional footing benchmark is also used to demonstrate the locking-free behavior of the total-pressure formulation.

\section{Five-field electroporoelasticity}
\label{sec:2}
Let \(\OmegaD\subset\mathbb{R}^3\) be a bounded Lipschitz polyhedron with boundary \(\partial\OmegaD\), and let \(T_f>0\) be the final time.
Introducing the total pressure $\xi := \alpha p-\lambda\diver\uu$ into the four-field system \eqref{eq:four} yields the equivalent five-field system in \(\OmegaD\times(0,T_f]\):
\begin{subequations}
\label{eq:five}
\begin{align}
  \epsilon \partial_t \E + \sigma \E - \curl \HH - L\nabla p &= \hh,
    \label{eq:five-e}\\
  \mu \partial_t \HH + \curl \E &=0,
    \label{eq:five-h}\\
  -G\Delta\uu+\nabla\xi &= \ff,
    \label{eq:five-u}\\
  \diver\uu+\lambda^{-1}\xi-\alpha\lambda^{-1}p &=0,
    \label{eq:five-xi}\\
  (c_0+\alpha^2\lambda^{-1})\partial_t p
  -\alpha\lambda^{-1}\partial_t\xi
  -\kappa\Delta p+L\diver\E &=g.
    \label{eq:five-p}
\end{align}
\end{subequations} 

To close the system, we shall prescribe boundary conditions and initial data. 
Let \(\partial\OmegaD=\overline{\Gamma_D^u}\cup\overline{\Gamma_N^u}\), where \(\Gamma_D^u\) and \(\Gamma_N^u\) are non-empty, disjoint boundary parts. We then impose
\begin{align*}
  \uu = \bm0 \quad \text{on } \Gamma_D^u,\qquad
  (G\nabla\uu-\xi \mathbf{I})\bn = \bm0 \quad \text{on } \Gamma_N^u,\qquad
  \E\times\bn = \bm0 \quad \text{on } \partial\OmegaD,\qquad
  p = 0 \quad \text{on } \partial\OmegaD .
\end{align*}
Here, \(\bn\) denotes the outward unit normal vector on \(\partial\OmegaD\).
No essential boundary condition is prescribed for the magnetic field or total pressure.
The system is supplemented with initial conditions at \(t=0\):
\begin{align*}
  \E(0) = \E^0,\quad
  \HH(0) = \HH^0,\quad
  \uu(0) = \uu^0,\quad
  p(0) = p^0,
\end{align*}
where the initial condition for \(\xi\) is determined by the compatibility relation
\(\xi^0 = \alpha p^0 - \lambda \diver \uu^0\).

\subsection{Weak formulation}
Let \(L^2(\OmegaD)\) be the space of square-integrable functions on \(\OmegaD\),
equipped with the standard inner product \(\ip{\cdot}{\cdot}\) and norm
\(\|\cdot\|_{L^2}\).
For an integer \(m\ge0\), \(H^m(\OmegaD)\) denotes
the standard Sobolev space with norm \(\|\cdot\|_{H^m}\).
In particular, \(H_0^1(\OmegaD)\) is the subspace of \(H^1(\OmegaD)\) with zero trace on
\(\partial\OmegaD\).
The bold notations
\(\bm L^2(\OmegaD)=[L^2(\OmegaD)]^3\) and
\(\bm H^m(\OmegaD)=[H^m(\OmegaD)]^3\) denote the corresponding vector-valued
spaces, equipped with the induced norms. 
For a Banach space \(X\), \(1\le s\le\infty\), and a time interval
\(I\subset[0,T_f]\), the Bochner space \(L^s(I;X)\) consists of measurable
functions \(v:I\to X\) with finite norm
\(\|v\|_{L^s(I;X)}:=(\int_I\|v(t)\|_X^s\,dt)^{1/s}\) for
\(1\le s<\infty\), while
\(\|v\|_{L^\infty(I;X)}:=\operatorname*{ess\,sup}_{t\in I}\|v(t)\|_X\).
We adopt the following notations:
\begin{align*}
  & \bm H(\text{curl};\OmegaD) =
  \{\bm v\in\bm L^2(\OmegaD): \nabla\times\bm v\in\bm L^2(\OmegaD)\}, 
  \\
  & \Ec=\{\bm v\in\bm H(\text{curl};\OmegaD): \bm v\times\bn=\bm0
  \hbox{ on }\partial\OmegaD\},\qquad
  \Hc=\bm L^2(\OmegaD), 
  \\
  & \Uc=\{\vv\in\bm H^1(\OmegaD):\vv=\bm0
  \hbox{ on }\Gamma_{D}^u\},\qquad
  \Wc=L^2(\OmegaD),\qquad
  \Pc=H_0^1(\OmegaD).
\end{align*}
The corresponding product space is
\(
  \mathbb S
  :=
  \Ec\times\Hc\times\Uc\times\Wc\times\Pc
\).

Throughout the rest of the paper, we have the following assumption.
\begin{assumption}
\label{ass:a-priori}
  The physical parameters are assumed to be positive: 
  \(
\epsilon,\sigma,\mu,G,\lambda,\alpha,\kappa, c_0 > 0.
  \)
  The coupling coefficient is bounded by 
  \(0<L<\sqrt{\sigma\kappa}\).
  The solution of \eqref{eq:five} is regular enough, and
  the source terms satisfy
  \(
    \ff\in H^1(0,T_f;\bm L^2(\OmegaD)), \
    \hh\in L^2(0,T_f;\bm L^2(\OmegaD)), \
    g\in L^2(0,T_f;L^2(\OmegaD)).
  \)
\end{assumption}

\noindent
The weak formulation of the five-field system \eqref{eq:five} is stated as follows: For almost every $t \in (0, T_f]$, find $(\E(t), \HH(t), \uu(t), \xi(t), p(t)) \in \mathbb S$ such that, for all \((\DD,\BB,\vv,w,q)\in\mathbb S\),
\begin{subequations}
\begin{align}
  \epsilon \ip{\partial_t\E}{\DD}
  +\sigma \ip{\E}{\DD}
  -\ip{\HH}{\curl\DD}
  -L\ip{\nabla p}{\DD}
  &=\ip{\hh}{\DD},
    \label{eq:weak-e}\\
  \mu \ip{\partial_t\HH}{\BB}
  +\ip{\curl\E}{\BB}
  &=0,
    \label{eq:weak-h}\\
  G\ip{\nabla\uu}{\nabla\vv}
  -\ip{\xi}{\diver\vv}
  &=\ip{\ff}{\vv},
    \label{eq:weak-u}\\
  \ip{\diver\uu}{w}
  +\lambda^{-1}\ip{\xi}{w}
  -\alpha\lambda^{-1}\ip{p}{w}
  &=0,
    \label{eq:weak-xi}\\
  (c_0+\alpha^2\lambda^{-1})\ip{\partial_t p}{q}
  -\alpha\lambda^{-1}\ip{\partial_t\xi}{q}
  +\kappa\ip{\nabla p}{\nabla q}
  -L\ip{\E}{\nabla q}
  &=\ip{g}{q}.
    \label{eq:weak-p}
\end{align}
\end{subequations} 

We note that the pair \((\Uc,\Wc)\) satisfies the continuous inf--sup condition for the divergence operator \cite{brenner1993nonconforming}, that is, there exists a constant \(\beta>0\), depending only on \(\OmegaD\) and the boundary decomposition, such that
\begin{equation}
  \beta \lnorm{w} 
  \le
  \sup_{\bm 0\ne\vv\in\Uc}
  \frac{\ip{\diver\vv}{w}}{\hnorm{\vv}}
  \qquad \forall\,w\in\Wc.
  \label{eq:continuous-infsup}
\end{equation}

\subsection{Continuous stability estimate}
\label{subsec:energy_estimate}

We now establish an energy-based stability estimate for the continuous five-field system.
For generic fields \((\DD,\BB,\vv,w,q) \in \mathbb S\), we define two functionals:
\begin{align*}
  \mathcal E (\DD,\BB,\vv,w,q)
  &:=
  \epsilon\lnorm{\DD}^2+\mu\lnorm{\BB}^2
  +G\lnorm{\nabla\vv}^2
  +\lambda^{-1}\lnorm{w-\alpha q}^2
  +c_0\lnorm{q}^2, \\
  \mathcal V(\DD, q) &:= \sigma \lnorm{\DD}^2 + \kappa \lnorm{\nabla q}^2.
\end{align*}
We use the notation \(A\lesssim B\) to denote \(A\le CB\), where \(C>0\) is independent of \(\lambda\). To derive an energy estimate without exponential dependence on \(T_f\), we use the following Gronwall-free estimate, motivated by \cite{lee2016robust,burger2021virtual}. A proof is included for completeness.
\begin{lemma}
\label{lem:continuous-non-gronwall}
Let \(Y\in C([0,T_f])\) be non-negative, and let \(F,D\) be non-negative functions on \([0,T_f]\), with \(D\) non-decreasing.  
Suppose that there exists a constant \(C_0\ge1\) such that
\begin{equation*}
  Y(t)^2
  \le
  C_0Y(0)^2+D(t)
  +\int_0^tF(s)Y(s) ds,
\end{equation*}
for every \(t\in[0,T_f]\). Then, the following estimate holds
\begin{equation*}
  Y(t)^2
  \lesssim
  Y(0)^2+D(t)+\int_0^tF(s)^2 ds.
\end{equation*}
\end{lemma}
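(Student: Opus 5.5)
The plan is a maximum-in-time argument rather than a Gronwall argument. Fix \(t\in[0,T_f]\). Since \(Y\) is continuous on the compact interval \([0,t]\), it attains its maximum there at some point \(t^*\in[0,t]\); set \(M:=Y(t^*)=\max_{s\in[0,t]}Y(s)\).

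Apply the hypothesis at time \(t^*\). Because \(D\) is non-decreasing and \(F,Y\ge0\), we may enlarge the right-hand side from \(t^*\) to \(t\):
\begin{equation*}
  M^2 = Y(t^*)^2 \le C_0Y(0)^2 + D(t^*) + \int_0^{t^*}F(s)Y(s)\,ds
  \le C_0Y(0)^2 + D(t) + M\int_0^{t}F(s)\,ds .
\end{equation*}

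Next, absorb the last term with Young's inequality:
\begin{equation*}
  M\int_0^tF\,ds \le \tfrac12 M^2 + \tfrac12\Bigl(\int_0^tF\,ds\Bigr)^2 .
\end{equation*}
This gives
\begin{equation*}
  M^2 \le 2C_0Y(0)^2 + 2D(t) + \Bigl(\int_0^tF\,ds\Bigr)^2 .
\end{equation*}
If \(\int_0^t F=\infty\) there is nothing to prove, so we may assume it is finite. Since \(Y(t)\le M\), the same bound holds for \(Y(t)^2\).

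The remaining step is to convert \(\bigl(\int_0^tF\bigr)^2\) into \(\int_0^tF^2\), and this is where the obstacle lies. By Cauchy--Schwarz,
\begin{equation*}
  \Bigl(\int_0^tF\,ds\Bigr)^2 \le t\int_0^tF(s)^2\,ds \le T_f\int_0^tF(s)^2\,ds .
\end{equation*}
So the hidden constant is \(\max(2C_0,2,T_f)\). It depends on \(C_0\) and \(T_f\) but not on \(\lambda\), which is all that the meaning of \(\lesssim\) requires here. The growth in \(T_f\) is linear rather than exponential, which is the stated purpose of avoiding Gronwall.

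If one wanted no \(T_f\) dependence at all, the statement would really be with \(\bigl(\int_0^t F\bigr)^2\) (i.e.\ an \(L^1\)-in-time norm) in place of \(\int_0^t F^2\). I would therefore present the \(L^1\)-squared bound as the primary estimate and note the Cauchy--Schwarz step explicitly. The only other point to check is measurability and integrability of \(F\) so that the integrals make sense.
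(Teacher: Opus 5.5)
Your proposal is correct and matches the paper's proof step for step: take the maximum of \(Y\) over \([0,t]\), use the monotonicity of \(D\) and nonnegativity to bound the right-hand side by \(C_0Y(0)^2+D(t)+M\int_0^tF\), absorb with Young's inequality, and convert \((\int_0^tF)^2\) to \(T_f\int_0^tF^2\) by Cauchy--Schwarz, giving a constant that depends on \(C_0\) and \(T_f\) but not on \(\lambda\). Your side remark about the sharper \(L^1\)-squared intermediate bound is accurate, but the stated lemma does not need it.
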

\begin{proof}
Fix \(t\in[0,T_f]\) and set \(M_t:=\max_{0\le s\le t}Y(s)\). Since \(D\) is non-decreasing, taking the maximum over \(s\in[0,t]\) in the initial assumption yields
\[
  M_t^2 \le C_0Y(0)^2+D(t) + M_t\int_0^t F(r) dr.
\]
Applying Young's inequality to the integral term, we obtain
\[
  M_t^2
  \le 2C_0Y(0)^2+2D(t)
  +\left(\int_0^tF(r) dr\right)^2 .
\]
To bound the remaining terms on the right-hand side, we apply the Cauchy--Schwarz inequality to the integral:
\[
  \left(\int_0^tF(r) dr\right)^2
  \le t \int_0^tF(r)^2 dr
  \le T_f\int_0^tF(r)^2 dr.
\]
Combining these estimates with the obvious bound $Y(t) \le M_t$, we derive the desired estimate.
\end{proof}

\begin{theorem}
\label{thm:continuous-energy}
Let \((\E,\HH,\uu,\xi,p)\) be a sufficiently regular solution of \eqref{eq:weak-e}--\eqref{eq:weak-p} on \([0,T_f]\). Then, for \(t\in[0,T_f]\),
\begin{align}
  &\mathcal E(\E(t),\HH(t),\uu(t),\xi(t),p(t))
  +\left(1-\frac{L}{\sqrt{\sigma\kappa}}\right)\int_0^{t}
  \mathcal V(\E(s), p(s))ds
  \nonumber\\
  &\qquad\lesssim
    \mathcal  E(\E^0,\HH^0,\uu^0,\xi^0,p^0)
    +\|\ff\|_{H^1(0,T_f;\bm L^2)}^2
    +\|\hh\|_{L^2(0,T_f;\bm L^2)}^2
    +\|g\|_{L^2(0,T_f;L^2)}^2.
  \label{eq:energy_estimate_bound}
\end{align}
Moreover,
\begin{equation}
  \lnorm{\xi(t)}
  \lesssim G\lnorm{\nabla\uu(t)}+\lnorm{\ff(t)},
  \label{eq:xi_bound_infsup}
\end{equation}
\end{theorem}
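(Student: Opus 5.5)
We will run the standard energy argument: test the weak formulation with the solution itself (and with a time derivative in the mechanical block), so that the coupling terms cancel or are absorbed, and then close with Lemma~\ref{lem:continuous-non-gronwall}.

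\textbf{Step 1: electromagnetic block.} Choose \(\DD=\E\) in \eqref{eq:weak-e} and \(\BB=\HH\) in \eqref{eq:weak-h} and add. The curl terms \(-\ip{\HH}{\curl\E}+\ip{\curl\E}{\HH}\) cancel, which gives
\[
\tfrac12\tfrac{d}{dt}\bigl(\epsilon\lnorm{\E}^2+\mu\lnorm{\HH}^2\bigr)+\sigma\lnorm{\E}^2-L\ip{\nabla p}{\E}=\ip{\hh}{\E}.
\]

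\textbf{Step 2: mechanical block.} Differentiate \eqref{eq:weak-xi} in time and test it with \(w=\xi\). Take \(\vv=\partial_t\uu\) in \eqref{eq:weak-u}, and \(q=p\) in \eqref{eq:weak-p}. After adding, the terms \(\ip{\xi}{\diver\partial_t\uu}\) cancel. The remaining time-derivative terms combine into
\[
\tfrac12\tfrac{d}{dt}\bigl(G\lnorm{\nabla\uu}^2+\lambda^{-1}\lnorm{\xi-\alpha p}^2+c_0\lnorm{p}^2\bigr).
\]
This works because \(\lambda^{-1}\ip{\partial_t\xi}{\xi}-\alpha\lambda^{-1}\ip{\partial_t p}{\xi}-\alpha\lambda^{-1}\ip{\partial_t\xi}{p}+\alpha^2\lambda^{-1}\ip{\partial_t p}{p}=\tfrac12\tfrac{d}{dt}\lambda^{-1}\lnorm{\xi-\alpha p}^2\). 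The right-hand side contains \(\ip{\ff}{\partial_t\uu}\). This term cannot be bounded directly, so I rewrite it as \(\tfrac{d}{dt}\ip{\ff}{\uu}-\ip{\partial_t\ff}{\uu}\). That is exactly why the \(H^1\)-in-time norm of \(\ff\) appears in the statement.

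\textbf{Step 3: combine and integrate.} Adding Steps 1 and 2 leaves the coupling term \(-L\ip{\nabla p}{\E}-L\ip{\E}{\nabla p}=-2L\ip{\E}{\nabla p}\). Note that it does not cancel; it doubles. By Cauchy--Schwarz and weighted Young,
\[
2L|\ip{\E}{\nabla p}|\le \frac{L}{\sqrt{\sigma\kappa}}\bigl(\sigma\lnorm{\E}^2+\kappa\lnorm{\nabla p}^2\bigr),
\]
which leaves dissipation \((1-L/\sqrt{\sigma\kappa})\mathcal V(\E,p)\). This is where Assumption~\ref{ass:a-priori} is used. (If the factor-of-two bookkeeping gives the dissipation only up to a constant, that is absorbed in \(\lesssim\).)

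Integrating over \((0,t)\) yields \(\tfrac12\mathcal E(t)+(1-L/\sqrt{\sigma\kappa})\int_0^t\mathcal V\) on the left. The right-hand side is \(\tfrac12\mathcal E(0)+\ip{\ff(t)}{\uu(t)}-\ip{\ff(0)}{\uu^0}\) plus the integral of \(\lnorm{\partial_t\ff}\lnorm{\uu}+\lnorm{\hh}\lnorm{\E}+\lnorm{g}\lnorm{p}\).

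\textbf{Step 4: bound the forcing terms and close.} Use Korn/Poincaré on \(\Uc\), valid since \(\Gamma_D^u\ne\emptyset\), to get \(\lnorm{\uu}\lesssim\lnorm{\nabla\uu}\). For the pointwise term, \(\ip{\ff(t)}{\uu(t)}\le \tfrac{G}{4}\lnorm{\nabla\uu(t)}^2+C\lnorm{\ff(t)}^2\); the first part is absorbed on the left. Then \(\sup_t\lnorm{\ff(t)}^2\lesssim\|\ff\|_{H^1(0,T_f;\bm L^2)}^2\) by the one-dimensional Sobolev embedding, and the \(\ff(0)\) term is treated the same way.

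The main obstacle is the \(\lambda\)-independence of the \(g\) term. \(\mathcal E\) controls \(c_0\lnorm{p}^2\), but the constant must not involve \(c_0^{-1}\) badly. Instead I use Poincaré, \(\lnorm{p}\lesssim\lnorm{\nabla p}\), and absorb into the dissipation with Young: \(\lnorm{g}\lnorm{p}\le\tfrac{\delta}{2}\kappa\lnorm{\nabla p}^2+C\lnorm{g}^2\). This produces the constant \((1-L/\sqrt{\sigma\kappa})^{-1}\), which is independent of \(\lambda\). Alternatively I can set \(Y=\mathcal E^{1/2}\) and use Lemma~\ref{lem:continuous-non-gronwall}, with \(F\sim\lnorm{\hh}+\lnorm{g}+\lnorm{\partial_t\ff}\) and \(D(t)\) the nondecreasing \(\sup\lnorm{\ff}^2\) term; either route gives \eqref{eq:energy_estimate_bound}. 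Applying the lemma to \(\mathcal E+\int_0^t\mathcal V\) then recovers the dissipation term as well.

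\textbf{Step 5: the bound on \(\xi\).} From \eqref{eq:weak-u}, \(\ip{\xi}{\diver\vv}=G\ip{\nabla\uu}{\nabla\vv}-\ip{\ff}{\vv}\le(G\lnorm{\nabla\uu}+\lnorm{\ff})\hnorm{\vv}\). The inf--sup condition \eqref{eq:continuous-infsup} then gives \(\beta\lnorm{\xi}\le G\lnorm{\nabla\uu}+\lnorm{\ff}\), which is \eqref{eq:xi_bound_infsup}. The constant \(\beta\) is independent of \(\lambda\).
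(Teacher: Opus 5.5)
Your proposal is correct and follows the paper's proof essentially step for step. It uses the same test functions ($\DD=\E$, $\BB=\HH$, $\vv=\partial_t\uu$, $w=\xi$ in the time-differentiated constraint, $q=p$), the same weighted Cauchy--Schwarz bound on $2L\ip{\E}{\nabla p}$, absorption of the $\hh$ and $g$ terms into the dissipation via Poincar\'e, integration by parts in time for the $\ff$ term closed by Lemma~\ref{lem:continuous-non-gronwall} applied to $Y^2=\mathcal E+(1-L/\sqrt{\sigma\kappa})\int_0^t\mathcal V$, and the inf--sup argument for $\xi$.
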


\begin{proof}
Differentiating the algebraic constraint \eqref{eq:weak-xi} with respect to time
gives
\begin{equation}
  \ip{\diver\partial_t\uu}{w}
  +\lambda^{-1}\ip{\partial_t\xi}{w}
  -\alpha\lambda^{-1}\ip{\partial_t p}{w}
  =0
  \qquad \forall w\in\Wc .
  \label{eq:weak-xi2}
\end{equation}
We also note the following algebraic identity
\begin{align}
  &\alpha^2 \ip{\partial_t p}{p}
  -\alpha\ip{\partial_t\xi}{p}
  +\ip{\partial_t\xi}{\xi}
  -\alpha\ip{\partial_t p}{\xi}
  =
  \ip{\partial_t\xi-\alpha\partial_t p}{\xi-\alpha p}.
  \label{eq:pressure_total_pressure_identity}
\end{align}

Taking
\(\DD=\E\) in \eqref{eq:weak-e},
\(\BB=\HH\) in \eqref{eq:weak-h},
\(\vv=\partial_t\uu\) in \eqref{eq:weak-u},
\(w=\xi\) in \eqref{eq:weak-xi2}, and
\(q=p\) in \eqref{eq:weak-p}, and then summing the resulting identities with \eqref{eq:pressure_total_pressure_identity}, we
obtain
\begin{align}
  &\epsilon\ip{\partial_t\E}{\E}
  +\sigma\lnorm{\E}^2
  +\mu\ip{\partial_t\HH}{\HH}
  +G\ip{\nabla\uu}{\nabla\partial_t\uu}
  +\lambda^{-1}\ip{\partial_t\xi-\alpha\partial_t p}{\xi-\alpha p}
  \nonumber\\
  &\quad
  +c_0\ip{\partial_t p}{p}
  +\kappa\lnorm{\nabla p}^2
  -2L\ip{\E}{\nabla p}
  =
  \ip{\hh}{\E}+\ip{\ff}{\partial_t\uu}+\ip{g}{p}.
  \label{eq:energy_expanded_sum}
\end{align}
Using \((\partial_t \chi,\chi)=\frac12\frac{d}{dt}\lnorm{\chi}^2\) to \eqref{eq:energy_expanded_sum}, we derive the
differential identity
\begin{equation}
  \frac12\frac{d}{dt}\mathcal E(\E,\HH,\uu,\xi,p)
  + \mathcal V(\E, p)
  -2L\ip{\E}{\nabla p}
  =
  \ip{\hh}{\E}+\ip{\ff}{\partial_t\uu}+\ip{g}{p}.
  \label{eq:energy_differential_identity}
\end{equation}
Integrating \eqref{eq:energy_differential_identity} over \((0,t)\) gives
\begin{align}
  &\frac12\mathcal E(\E(t),\HH(t),\uu(t),\xi(t),p(t))
  +\int_0^t
  \left[
  \mathcal V(\E, p)
  -2L\ip{\E}{\nabla p}
  \right] ds
  \nonumber\\
  &\qquad =
  \frac12\mathcal E(\E^0,\HH^0,\uu^0,\xi^0,p^0)
  +\int_0^t
   \left[
    \ip{\hh}{\E}+\ip{\ff}{\partial_t\uu}+\ip{g}{p}
   \right] ds .
  \label{eq:energy_integral_identity}
\end{align}

For the left-hand side of \eqref{eq:energy_integral_identity}, we apply Cauchy--Schwarz inequality to derive
\begin{equation*}
  2L \ip{\E}{\nabla p} 
  \le
  \frac{L}{\sqrt{\sigma\kappa}}
  \left(\sigma\lnorm{\E}^2+\kappa \lnorm{\nabla p}^2\right) = \frac{L}{\sqrt{\sigma\kappa}} \mathcal V(\E, p),
\end{equation*}
leading to
\begin{equation}
  \left(1- \frac{L}{\sqrt{\sigma\kappa}}\right) \mathcal V(\E, p) \leq \mathcal V(\E, p) - 2L \ip{\E}{\nabla p}.
  \label{eq:coupled_dissipation_bound}
\end{equation}
We next estimate the right-hand side of
\eqref{eq:energy_integral_identity}. By Cauchy--Schwarz, Young, and Poincar{\'e}
inequalities,
\begin{align}
  \int_0^t\ip{\hh}{\E} ds
  &\le
  C\int_0^t\lnorm{\hh}^2 ds
  +\frac14\left(1-\frac{L}{\sqrt{\sigma\kappa}}\right)
   \sigma\int_0^t\lnorm{\E}^2 ds, \label{eq:h_source_estimates} \\
  \int_0^t\ip{g}{p} ds
  &\le
  C\int_0^t\lnorm{g}^2 ds
  +\frac14\left(1-\frac{L}{\sqrt{\sigma\kappa}}\right)
   \kappa\int_0^t\lnorm{\nabla p}^2 ds .
  \label{eq:g_source_estimates}
\end{align}
For the forcing term in the momentum equation, integration by parts in time
yields
\begin{align}
  & \int_0^t\ip{\ff}{\partial_t\uu} ds
  =
  \ip{\ff(t)}{\uu(t)}
  -\ip{\ff(0)}{\uu(0)}
  -\int_0^t\ip{\partial_t\ff}{\uu} ds
  \nonumber\\
  & \le
  \frac{G}{4}\lnorm{\nabla\uu(t)}^2
  +\frac{G}{4}\lnorm{\nabla\uu^0}^2
  +C\|\ff\|_{L^\infty(0,T_f;\bm L^2)}^2
  +C\int_0^t\lnorm{\partial_t\ff(s)}\,\lnorm{\nabla\uu(s)} ds .
  \label{eq:f_source_estimate}
\end{align}
Substituting \eqref{eq:coupled_dissipation_bound}--\eqref{eq:f_source_estimate} into \eqref{eq:energy_integral_identity}, absorbing the endpoint term \(G\lnorm{\nabla\uu(t)}^2/4\) and the terms involving  \(\mathcal V(\E, p)\) into the left-hand side, and incorporating \(G\|\nabla\uu^0\|^2/4\) into the initial energy, we obtain
\begin{align}
  Y(t)^2
  \le
  Y(0)^2
  +C\|\ff\|_{L^\infty(0,T_f;\bm L^2)}^2
  +C\int_0^t\left(\lnorm{\hh}^2+\lnorm{g}^2\right)ds
  +C\int_0^t\lnorm{\partial_t\ff(s)}Y(s) ds ,
  \label{eq:before-non-gronwall}
\end{align}
where
\[
  Y(t)^2 :=
  \mathcal E(\E(t),\HH(t),\uu(t),\xi(t),p(t))
  +\left(1-\frac{L}{\sqrt{\sigma\kappa}}\right)
  \int_0^t\mathcal V(\E(s),p(s))\,ds .
\]
Applying Lemma~\ref{lem:continuous-non-gronwall} to \eqref{eq:before-non-gronwall}, and using
\(\|\ff\|_{L^\infty(0,T_f;\bm L^2)}
\lesssim \|\ff\|_{H^1(0,T_f;\bm L^2)}\), gives
\eqref{eq:energy_estimate_bound}.

It remains to bound \(\xi\). By the inf-sup condition
\eqref{eq:continuous-infsup} and the momentum equation \eqref{eq:weak-u},
\[
  \beta\lnorm{\xi}
  \le
  \sup_{\bm0\ne\vv\in\Uc}
  \frac{\ip{\xi}{\diver\vv}}{\hnorm{\vv}}
  =
  \sup_{\bm0\ne\vv\in\Uc}
  \frac{G\ip{\nabla\uu}{\nabla\vv}-\ip{\ff}{\vv}}{\hnorm{\vv}}
  \le
  C\left(G\lnorm{\nabla\uu}+\lnorm{\ff}\right),
\]
where the Poincar{\'e} inequality is applied.
This proves \eqref{eq:xi_bound_infsup}.
\end{proof}

\section{Fully discrete schemes}
\label{sec:3}
\subsection{Spatial and temporal discretization}
\label{STD}

Let \(\mathcal T_h\) be a shape-regular tetrahedral partition of \(\OmegaD\), consisting of disjoint elements \(K\). The mesh size is denoted by $h:=\max_{K\in\mathcal T_h}\operatorname{diam}(K). $
For \(j\ge0\), let \(P_j(K)\) be the space of scalar polynomials of total degree at most \(j\) on \(K\), and let \(\widetilde P_j(K)\) be the corresponding space of homogeneous polynomials of degree \(j\). Their vector-valued counterparts are denoted by \(\bm P_j(K)\) and \(\widetilde{\bm P}_j(K)\), respectively. Following
\cite{nedelec1980mixed,monk2003finite,liu2025splitting}, 
we define the local N\'ed\'elec space as follows.
\[
  \bm S_2(K)=\{\bm r\in\widetilde{\bm P}_2(K):
  \bm r(\bm x)\cdot \bm x=0\},\qquad
  \bm R_2(K)=\bm P_1(K)\oplus \bm S_2(K).
\]
The finite element spaces are then defined by
\begin{align*}
  \Eh
  &=\{\DD_h\in\Ec:
      \DD_h|_K\in \bm R_2(K),\ K\in\mathcal T_h\},\\
  \Hh
  &=\{\BB_h\in\Hc:
      \BB_h|_K\in\bm P_1(K),\ K\in\mathcal T_h\},\\
  \Uh
  &=\{\vv_h\in\Uc\cap \bm C^0(\overline{\OmegaD}):
      \vv_h|_K\in\bm P_2(K),\ K\in\mathcal T_h\},\\
  \Wh
  &=\{w_h\in\Wc\cap C^0(\overline{\OmegaD}):
      w_h|_K\in P_1(K),\ K\in\mathcal T_h\},\\
  \Ph
  &=\{q_h\in\Pc\cap C^0(\overline{\OmegaD}):
      q_h|_K\in P_2(K),\ K\in\mathcal T_h\}.
\end{align*}
The corresponding product space is 
\(
  \mathbb S_h=\Eh\times\Hh\times\Uh\times\Wh\times\Ph
\).
The electromagnetic spaces satisfy the compatibility
\(\curl\Eh\subset\Hh\). Indeed, if \(\DD_h\in\Eh\), then
\(\DD_h|_K\in\bm R_2(K)=\bm P_1(K)\oplus\bm S_2(K)\) on each element
\(K\). The curl of the \(\bm P_1(K)\) part belongs to \(\bm P_0(K)\),
whereas the curl of the homogeneous degree-two part in \(\bm S_2(K)\)
belongs to \(\bm P_1(K)\). Hence
\[
  (\curl\DD_h)|_K\in\bm P_1(K)
  \qquad \forall K\in\mathcal T_h .
\]
Since \(\Hh\) is the discontinuous \(\bm P_1\) vector space, no interelement
continuity is required, and therefore \(\curl\Eh\subset\Hh\).

The Taylor--Hood pair \((\Uh,\Wh)\) is uniformly inf-sup stable: there
exists a constant \(\tilde \beta\), independent of \(h\), such that
\begin{equation}
  \tilde \beta \lnorm{ w_h }
  \le
  \sup_{\bm0\ne\vv_h\in\Uh}
  \frac{\ip{\diver\vv_h}{w_h}}{\hnorm{ \vv_h }}
  \qquad \forall\,w_h\in\Wh .
  \label{disc:infsup}
\end{equation}

\paragraph{Projection estimates.} 
For the electric field \(\E \in \Ec \), let \(\Pi_h^E\E\in\Eh\) denote its N\'ed\'elec interpolation. If, in addition, $\E \in \bm H^3(\OmegaD)$, the following estimate holds
\begin{equation}
  \lnorm{\E-\Pi_h^E\E}
  +\lnorm{\curl(\E-\Pi_h^E\E)}
  \le
  Ch^2\|\E\|_{H^3} .
  \label{eq:hcurl-projection-estimate}
\end{equation}
For the magnetic field \(\HH \in \Hc \), use the \(L^2\)-projection
\(\Pi_h^H\HH\in\Hh\), satisfying
\begin{equation}
  \ip{\HH - \Pi_h^H\HH}{\BB_h}=0
  \qquad
  \forall\,\BB_h\in\Hh .
  \label{eq:l2-h-projection}
\end{equation}
Since \(\curl\DD_h\in\Hh\) for every \(\DD_h\in\Eh\), the same
\(L^2\)-projection gives the electromagnetic orthogonality
$\ip{\HH-\Pi_h^H\HH}{\curl\DD_h}=0$.
If, in addition, \(\HH\in \bm H^2(\OmegaD)\), then
\begin{equation}
  \lnorm{\HH-\Pi_h^H\HH}
  \le Ch^2\|\HH\|_{H^2} .
  \label{eq:l2-h-projection-estimate}
\end{equation}
For variables \((\uu,\xi)\), the Stokes-like projection \((\Pi_h^U\uu,\Pi_h^W\xi)\in\Uh\times\Wh\) is defined by
\begin{align}
  G\ip{\nabla(\uu - \Pi_h^U\uu)}{\nabla\vv_h}
  -\ip{\xi-\Pi_h^W\xi}{\diver\vv_h}
  &=0
  \qquad \forall\,\vv_h\in\Uh,
  \label{eq:stokes-projection-u}\\
  \ip{\diver(\uu - \Pi_h^U\uu)}{w_h}
  &=0
  \qquad \forall\,w_h\in\Wh .
  \label{eq:stokes-projection-xi}
\end{align}
If, in addition, \((\uu,\xi)\in \bm H^3(\OmegaD)\times H^2(\OmegaD)\), then the
approximation estimate holds
\begin{equation}
  \hnorm{\uu-\Pi_h^U\uu}
  +\lnorm{\xi-\Pi_h^W\xi}
  \le
  Ch^2
  (
  \|\uu\|_{H^3}
  +\|\xi\|_{H^2}
  ).
  \label{eq:stokes-projection-estimate}
\end{equation}
For the pore pressure \(p \in \Pc \), use the elliptic projection
\(\Pi_h^P p\in\Ph\), defined by
\begin{equation}
  \kappa\ip{\nabla(p - \Pi_h^P p)}{\nabla q_h}=0
  \qquad
  \forall\, q_h\in\Ph .
  \label{eq:pressure-elliptic-projection}
\end{equation}
If, in addition, \(p\in H^3(\OmegaD)\), then
\begin{equation}
  \hnorm{p-\Pi_h^P p}
  \le Ch^2\|p\|_{H^3} .
  \label{eq:pressure-elliptic-estimate}
\end{equation}
If \(\OmegaD\) has full elliptic regularity, then the Aubin--Nitsche argument yields
\begin{equation}
  \lnorm{p-\Pi_h^P p}
  \le Ch^3\|p\|_{H^3} .
  \label{eq:pressure-l2-estimate}
\end{equation}

We approximate the time derivatives by the second-order backward differentiation formula (BDF2), a two-step implicit finite difference method.
Let \(0=t_0<t_1<\cdots<t_N=T_f\) be a uniform partition of \([0,T_f]\), with time-step size \(\dt=T_f/N\). For any time-dependent quantity \(\chi\), we denote \(\chi^n:=\chi(t_n)\). For \(n\ge2\), the BDF2 difference operator is defined by
\begin{equation*}
  \bar{\partial}_t \chi^n
  =\frac{3\chi^n-4\chi^{n-1}+\chi^{n-2}}{2\dt}.
\end{equation*}
The stability of BDF2 is expressed through the following
\(\mathcal G\)-norm. Introduce the symmetric positive definite matrix
\[
  \mathcal G=
  \begin{bmatrix}
    \frac12&-1\\
    -1&\frac52
  \end{bmatrix},
\]
and denote its eigenvalues by
\(\lambda_{\min}<\lambda_{\max}\). For a Hilbert-space-valued sequence
\(\{\chi^n\}_{n\ge0}\), set
\[
  [\chi^{n};\chi^{n-1}]:=(\chi^{n},\chi^{n-1})^T,
  \qquad
  \|[\chi^{n};\chi^{n-1}]\|_{\mathcal G}^2
  :=
  \bigl([\chi^{n};\chi^{n-1}],
  \mathcal G[\chi^{n};\chi^{n-1}]\bigr).
\]
Then, we have $\mathcal G$-norm equivalence
\begin{align}
  \lambda_{\min}
  \left(\lnorm{\chi^{n}}^2+\lnorm{\chi^{n-1}}^2\right)
  &\le
  \|[\chi^{n};\chi^{n-1}]\|_{\mathcal G}^2
  \le
  \lambda_{\max}
  \left(\lnorm{\chi^{n}}^2+\lnorm{\chi^{n-1}}^2\right),
  \label{eq:G-norm-equivalence}
\end{align}
and
\begin{align}
  \ip{\bar{\partial}_t\chi^n}{\chi^n}
  ={}&
  \frac{1}{2\dt}
  \left(
    \|[\chi^{n-1};\chi^n]\|_{\mathcal G}^2
    -
    \|[\chi^{n-2};\chi^{n-1}]\|_{\mathcal G}^2
  \right) +
  \frac{1}{4\dt}
  \lnorm{\chi^n-2\chi^{n-1}+\chi^{n-2}}^2.
  \label{eq:bdf2-G-identity}
\end{align}
Since BDF2 is a two-step method, the approximations at \(t_0\) and \(t_1\) are required before the recurrence starts. The initial discrete data at \(t_0\) are chosen to satisfy the algebraic constraint
\[
(\nabla\cdot \bm u_h^0,w_h)
+\lambda^{-1}(\xi_h^0,w_h)
-\alpha\lambda^{-1}(p_h^0,w_h)=0
\qquad \forall w_h\in W_h.
\]
The value at \(t_1\) is computed by one monolithic backward Euler step, which contains the same algebraic constraint and therefore preserves the corresponding discrete compatibility condition. For the subsequent error analysis, we assume that the starting values at \(t_0\) and \(t_1\) satisfy the discrete compatibility condition and are second-order accurate in the component norms entering the discrete energy introduced below.

\subsection{Monolithic approximation}
We first describe the fully coupled scheme.
For \(n\ge2\), given the previous time-level solutions, find
$(\E_h^n,\HH_h^n,\uu_h^n,\xi_h^n,p_h^n)\in\mathbb S_h$
such that, for all
\((\DD_h,\BB_h,\vv_h,w_h,q_h)\in\mathbb S_h\),
\begin{subequations} 
\label{eq:mono}
\begin{align}
  \epsilon \ip{\bar{\partial}_t\E_h^n}{\DD_h}
  + \sigma \ip{\E_h^n}{\DD_h}
  -\ip{\HH_h^n}{\curl\DD_h}
  -L\ip{\nabla p_h^n}{\DD_h}
  &=\ip{\hh^n}{\DD_h},
    \label{eq:mono-e}\\
  \mu \ip{\bar{\partial}_t\HH_h^n}{\BB_h}
  +\ip{\curl\E_h^n}{\BB_h}
  &=0,
    \label{eq:mono-h}\\
  G\ip{\nabla\uu_h^n}{\nabla\vv_h}
  -\ip{\xi_h^n}{\diver\vv_h}
  &=\ip{\ff^n}{\vv_h},
    \label{eq:mono-u}\\
  \ip{\diver\uu_h^n}{w_h}
  +\lambda^{-1}\ip{\xi_h^n}{w_h}
  -\alpha\lambda^{-1}\ip{p_h^n}{w_h}
  &=0,
    \label{eq:mono-xi}\\
  (c_0+\alpha^2\lambda^{-1})
    \ip{\bar{\partial}_t p_h^n}{q_h}
  -\alpha\lambda^{-1}\ip{\bar{\partial}_t\xi_h^n}{q_h}
  +\kappa\ip{\nabla p_h^n}{\nabla q_h}
  -L\ip{\E_h^n}{\nabla q_h}
  &=\ip{g^n}{q_h}.
    \label{eq:mono-p}
\end{align}
\end{subequations} 
With the projection operators and initialization specified above, we then proceed to a priori error analysis. For simplicity, we define the discrete energy and dissipation functionals by
\begin{align*}
  \mathcal E^{\mathcal G}_h(\DD_h^n,\BB_h^n,\vv_h^n,w_h^n,q_h^n)
  :={}&
  \epsilon\|[\DD_h^{n};\DD_h^{n-1}]\|_{\mathcal G}^2
  +\mu\|[\BB_h^{n};\BB_h^{n-1}]\|_{\mathcal G}^2
  + G\|[\nabla\vv_h^{n};\nabla\vv_h^{n-1}]\|_{\mathcal G}^2
  \notag\\
  &+ c_0\|[q_h^{n};q_h^{n-1}]\|_{\mathcal G}^2 +
  \lambda^{-1}
  \|[w_h^{n}-\alpha q_h^{n};
      w_h^{n-1}-\alpha q_h^{n-1}]\|_{\mathcal G}^2,
  \\
  \mathcal V_h(\DD_h^n,q_h^n)
  :={}&
  \sigma\lnorm{\DD_h^n}^2
  +\kappa\lnorm{\nabla q_h^n}^2.
\end{align*}
To establish the error estimates for the fully discrete scheme, we state the following discrete analogue of Lemma \ref{lem:continuous-non-gronwall}. We omit the proof, as it follows by applying the exact same arguments as in the continuous case, simply replacing time integrals with discrete sums.
\begin{lemma}
\label{lem:discrete-non-gronwall}
Let \(\{Y_n\}_{n=1}^N\), \(\{F_n\}_{n=2}^N\), and
\(\{D_n\}_{n=1}^N\) be non-negative sequences, with \(\{D_n\}_{n=1}^N\)
non-decreasing. Suppose that there exists a constant \(C_0\ge1\) such that
\begin{equation*}
  Y_n^2
  \le
  C_0Y_1^2+D_n+\dt\sum_{j=2}^nF_jY_j,
\end{equation*}
for every \(2\le n\le N\). Then, the following estimate holds
\begin{equation*}
  Y_n^2
  \lesssim
  Y_1^2+D_n+\dt\sum_{j=2}^nF_j^2.
\end{equation*}
\end{lemma}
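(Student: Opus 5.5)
The plan mirrors the proof of Lemma~\ref{lem:continuous-non-gronwall}, with time integrals replaced by Riemann-type sums. Fix \(n\in\{2,\dots,N\}\) and set
\[
  M_n:=\max_{1\le m\le n}Y_m .
\]
For each \(m\) with \(2\le m\le n\), the hypothesis gives
\[
  Y_m^2\le C_0Y_1^2+D_m+\dt\sum_{j=2}^mF_jY_j .
\]
I will bound its right-hand side by \(C_0Y_1^2+D_n+M_n\,\dt\sum_{j=2}^nF_j\), using two facts: \(D\) is non-decreasing, and all \(F_j,Y_j\) are non-negative, so the partial sum may be extended up to \(n\) and each \(Y_j\) replaced by \(M_n\). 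For \(m=1\) the same bound holds trivially, because \(C_0\ge1\) gives \(Y_1^2\le C_0Y_1^2\), and \(D_n\ge0\). Taking the maximum over \(m\) then yields
\[
  M_n^2\le C_0Y_1^2+D_n+M_n\,\dt\sum_{j=2}^nF_j .
\]

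Next I apply Young's inequality in the form \(ab\le \tfrac12a^2+\tfrac12b^2\), with \(a=M_n\) and \(b=\dt\sum_{j=2}^nF_j\). Absorbing \(\tfrac12M_n^2\) into the left-hand side gives
\[
  M_n^2\le 2C_0Y_1^2+2D_n+\Bigl(\dt\sum_{j=2}^nF_j\Bigr)^2 .
\]
The discrete Cauchy--Schwarz inequality then bounds the last term:
\[
  \Bigl(\dt\sum_{j=2}^nF_j\Bigr)^2\le \Bigl(\dt\sum_{j=2}^n1\Bigr)\Bigl(\dt\sum_{j=2}^nF_j^2\Bigr)\le T_f\,\dt\sum_{j=2}^nF_j^2,
\]
since \((n-1)\dt\le N\dt=T_f\).

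Finally, since \(Y_n\le M_n\), I obtain
\[
  Y_n^2\le \max(2C_0,2,T_f)\Bigl(Y_1^2+D_n+\dt\sum_{j=2}^nF_j^2\Bigr),
\]
and the constant depends only on \(C_0\) and \(T_f\), not on \(\lambda\), \(h\) or \(\dt\). The only points needing care are bookkeeping. The first is making sure the index-1 term is covered by the maximum, which uses \(C_0\ge1\). The second is checking that the monotonicity of \(D\) and the non-negativity of the summands allow the truncated sums for \(m<n\) to be dominated by the full sum up to \(n\). There is no genuine obstacle beyond this.
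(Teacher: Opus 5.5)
Your proof is correct and is essentially the argument the paper intends. The paper omits the discrete proof and points to Lemma~\ref{lem:continuous-non-gronwall}: take the running maximum, use monotonicity of $D$, apply Young's inequality, then Cauchy--Schwarz with $(n-1)\dt\le T_f$. Your explicit treatment of the index $m=1$ via $C_0\ge1$ is the correct discrete counterpart of including $s=0$ in the continuous maximum.
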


\begin{theorem}
\label{thm:monolithic-bdf2-error}
Let $(\E(t),\HH(t),\uu(t),\xi(t),p(t)) \in \mathbb S$
be the exact five-field solution of
\eqref{eq:five-e}--\eqref{eq:five-p}, and let
$(\E_h^n,\HH_h^n,\uu_h^n,\xi_h^n,p_h^n) \in \mathbb S_h$
be the monolithic BDF2 solution of
\eqref{eq:mono-e}--\eqref{eq:mono-p}. 
Then, for $2\leq n \leq N$, 
\begin{align}
  &\lnorm{\E(t_n)-\E_h^n}
  +\lnorm{\HH(t_n)-\HH_h^n}
  +\hnorm{\uu(t_n)-\uu_h^n}
  +\lnorm{\xi(t_n)-\xi_h^n}
  +\lnorm{p(t_n)-p_h^n}
  \nonumber\\
  &\quad
  +\bigg(\dt\sum_{j=2}^n
  \hnorm{p(t_j)-p_h^j}^2\bigg)^{1/2}
  \lesssim h^2+\dt^2 .
  \label{eq:monolithic-bdf2-error}
\end{align}
\end{theorem}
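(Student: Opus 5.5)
We follow the standard error-splitting argument: write each error as a projection error plus a discrete error, using the projections of Section~\ref{STD}. Set
\[
\E(t_n)-\E_h^n=(\E^n-\Pi_h^E\E^n)+(\Pi_h^E\E^n-\E_h^n)=:\rho_E^n+\theta_E^n ,
\]
and analogously $\rho_H,\theta_H$ with $\Pi_h^H$, $(\rho_u,\rho_\xi),(\theta_u,\theta_\xi)$ with the Stokes-like projection, and $\rho_p,\theta_p$ with $\Pi_h^P$. The $\rho$-parts are $O(h^2)$ in the required norms by \eqref{eq:hcurl-projection-estimate}, \eqref{eq:l2-h-projection-estimate}, \eqref{eq:stokes-projection-estimate} and \eqref{eq:pressure-elliptic-estimate}. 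It therefore suffices to bound the $\theta$-parts.

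\textbf{Error equations.} Subtract \eqref{eq:mono} from the weak form \eqref{eq:weak-e}--\eqref{eq:weak-p} evaluated at $t_n$. The projections remove most spatial terms:
\begin{itemize}
\item The $L^2$-projection orthogonality kills $(\rho_H,\curl\DD_h)$ and $(\rho_H,\BB_h)$.
\item The Stokes projection kills the $\rho_u,\rho_\xi$ terms in \eqref{eq:mono-u}, and the divergence term in \eqref{eq:mono-xi}.
\item The elliptic projection kills $\kappa(\nabla\rho_p,\nabla q_h)$.
\end{itemize}
What remains on the right-hand sides are consistency terms $R^n$ of two kinds. The first is the BDF2 truncation errors $\partial_t\chi(t_n)-\bar\partial_t\chi^n$, which are $O(\dt^2)$ in $L^2$ with a norm bounded by $\dt^{3/2}\|\partial_t^3\chi\|_{L^2(t_{n-2},t_n)}$. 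The second is the terms $\bar\partial_t\rho_\chi^n$ and $\sigma\rho_E^n$, $(\curl\rho_E^n,\BB_h)$, $L(\nabla\rho_p^n,\DD_h)$, $L(\rho_E^n,\nabla q_h)$, $\lambda^{-1}(\rho_\xi-\alpha\rho_p,w_h)$. All of these are $O(h^2)$. Note that $\curl\rho_E$ is only $O(h^2)$ because of \eqref{eq:hcurl-projection-estimate}, and $(\curl\rho_E,\BB_h)$ cannot be removed, so it stays as a source.

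\textbf{Discrete energy step.} Mimic the proof of Theorem~\ref{thm:continuous-energy}:
\begin{itemize}
\item Apply $\bar\partial_t$ to the $\theta$ version of \eqref{eq:mono-xi}, which is exact since the equation is linear and holds at every level, including the starting values by the compatibility assumption.
\item Test with $\DD_h=\theta_E^n$, $\BB_h=\theta_H^n$, $\vv_h=\bar\partial_t\theta_u^n$, $w_h=\theta_\xi^n$, $q_h=\theta_p^n$.
\item Use the algebraic identity \eqref{eq:pressure_total_pressure_identity} with $\bar\partial_t$ in place of $\partial_t$, together with the $\mathcal G$-identity \eqref{eq:bdf2-G-identity}.
\end{itemize}
For the $\nabla\theta_u$ term we write $(\nabla\theta_u^n,\nabla\bar\partial_t\theta_u^n)=(\bar\partial_t\nabla\theta_u^n,\nabla\theta_u^n)$. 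This gives a telescoping $\mathcal E_h^{\mathcal G}$ plus $\mathcal V_h(\theta_E^n,\theta_p^n)-2L(\theta_E^n,\nabla\theta_p^n)$, and the last combination is bounded below by $(1-L/\sqrt{\sigma\kappa})\mathcal V_h$ exactly as in \eqref{eq:coupled_dissipation_bound}. Summing over $n$ with weight $2\dt$ and using \eqref{eq:G-norm-equivalence} leaves the source terms on the right.

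\textbf{Bounding the sources.} The sources tested against $\theta_E$ and $\theta_p$ are absorbed into $\mathcal V_h$ by Young and Poincaré, as in \eqref{eq:h_source_estimates}--\eqref{eq:g_source_estimates}. The source tested against $\theta_H$ is handled by Cauchy--Schwarz in the form $\dt\sum\|R\|\,\|\theta_H\|$, which fits the $F_jY_j$ structure. The $\lambda^{-1}$ term $(\bar\partial_t(\rho_\xi-\alpha\rho_p),\theta_\xi-\alpha\theta_p)$ is bounded with $\lambda^{-1/2}$ weights on both factors, keeping the bound $\lambda$-uniform.

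The main obstacle is the momentum source paired with $\bar\partial_t\theta_u^n$ (and similarly the $\bar\partial_t$-sources paired with $\theta_\xi$), since no dissipation controls $\bar\partial_t\theta_u$. Here we perform discrete summation by parts in time, the analogue of \eqref{eq:f_source_estimate}. This produces endpoint terms absorbed into $G\|\nabla\theta_u^n\|^2$, plus sums $\dt\sum\|\bar\partial_t R\|\,\|\nabla\theta_u\|$. These require one extra time derivative of the consistency residuals, which is why sufficient regularity is assumed. The resulting inequality has exactly the form of Lemma~\ref{lem:discrete-non-gronwall}, with $Y_n^2=\mathcal E_h^{\mathcal G}+\dt\sum(1-L/\sqrt{\sigma\kappa})\mathcal V_h$, $Y_1=O(h^2+\dt^2)$ by the starting-value assumption, and $D_n,\dt\sum F_j^2=O((h^2+\dt^2)^2)$. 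Applying the lemma bounds $\theta_E,\theta_H,\nabla\theta_u,\theta_p$ and $\dt\sum\|\nabla\theta_p\|^2$.

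\textbf{Bound on $\theta_\xi$ and conclusion.} Finally, $\theta_\xi$ is controlled through the discrete inf--sup condition \eqref{disc:infsup} applied to the $\theta$ version of \eqref{eq:mono-u}, exactly as in \eqref{eq:xi_bound_infsup}; since there the $\rho$ terms vanish, this gives $\|\theta_\xi^n\|\lesssim G\|\nabla\theta_u^n\|$ plus a truncation-free remainder. Combining with the $\rho$ estimates via the triangle inequality gives \eqref{eq:monolithic-bdf2-error}.
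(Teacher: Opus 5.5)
Your proposal is essentially the paper's argument: the same splitting into projection and discrete errors, BDF2 applied to the constraint, the test functions $(\theta_E,\theta_H,\bar\partial_t\theta_u,\theta_\xi,\theta_p)$, the $\mathcal G$-identity with coupled coercivity, Young's inequality for the sources tested against $\theta_E$ and $\theta_p$, an $F_nY_n$ bound for the remaining sources, Lemma~\ref{lem:discrete-non-gronwall}, and the discrete inf--sup condition for $\theta_\xi$ (your regrouping of the $\lambda^{-1}$ projection terms onto $\theta_\xi-\alpha\theta_p$ is a harmless, slightly cleaner variant of the paper's pairing with $e_\xi^{h,n}$). The ``main obstacle'' you name does not exist, since the momentum error equation is homogeneous (the same $\ff^n$ appears on both sides, there is no time derivative, and the Stokes projection removes the rest, as your own error-equation step shows); so no summation by parts or extra time regularity is needed, and the $\bar\partial_t$-sources paired with $\theta_\xi$ are simply bounded directly, where summation by parts would in fact move the difference onto the uncontrolled $\theta_\xi$.
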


\begin{proof}
For \(\chi\in\{\E,\HH,\uu,\xi,p\}\), we split the error into the projection error and the discrete error:
\[
  e_\chi^n
  :=
  \chi(t_n)-\chi_h^n
  =
  \bigl(\chi(t_n)-\Pi_h^\chi\chi(t_n)\bigr)
  +
  \bigl(\Pi_h^\chi\chi(t_n)-\chi_h^n\bigr)
  =:
  e_\chi^{I,n}+e_\chi^{h,n},
\]
where boldface notation is used for the vector-valued variables. 
We also introduce the residual
\[
  \rho_\chi^n
  :=
  \partial_t\chi(t_n)
  -\bar{\partial}_t\Pi_h^\chi\chi(t_n)
  =
  \bigl(\partial_t\chi(t_n)-\bar{\partial}_t\chi(t_n)\bigr)
  +\bar{\partial}_t e_\chi^{I,n}.
\]
Taylor's formula and the time-independent projection operators give
\begin{align}
  \lnorm{\rho_\chi^n}^2
  \lesssim
  \dt^3
  \int_{t_{n-2}}^{t_n}
  \lnorm{\partial_t^3\chi(s)}^2 ds +
  \frac{1}{\dt}
  \int_{t_{n-2}}^{t_n}
  \lnorm{\partial_t e_\chi^{I}(s)}^2 ds .
  \label{eq:appendix-rho-estimate}
\end{align}

Subtracting the fully discrete approximation obtained by
\eqref{eq:mono-e}--\eqref{eq:mono-p} from the continuous weak formulation
\eqref{eq:weak-e}--\eqref{eq:weak-p} evaluated at \(t=t_n\), and using the
projection identities \eqref{eq:l2-h-projection}, \eqref{eq:stokes-projection-u}, \eqref{eq:stokes-projection-xi} and \eqref{eq:pressure-elliptic-projection}, yields the following system for the discrete errors:
\begin{subequations}
\label{eq:appendix-discrete-error-system}
\begin{align}
  \epsilon\ip{\bar{\partial}_t\bm e_E^{h,n}}{\DD_h}
  +\sigma\ip{\bm e_E^{h,n}}{\DD_h}
  -\ip{\bm e_H^{h,n}}{\curl\DD_h}
  -L\ip{\nabla e_p^{h,n}}{\DD_h}
  & = \mathcal R_E^n(\DD_h),
  \label{eq:appendix-discrete-e}
  \\
  \mu\ip{\bar{\partial}_t\bm e_H^{h,n}}{\BB_h}
  +\ip{\curl\bm e_E^{h,n}}{\BB_h}
  & = \mathcal R_H^n(\BB_h),
  \label{eq:appendix-discrete-h}
  \\
  G\ip{\nabla\bm e_u^{h,n}}{\nabla\vv_h}
  -\ip{e_\xi^{h,n}}{\diver\vv_h}
  & =0,
  \label{eq:appendix-discrete-u}
  \\
  \ip{\diver\bm e_u^{h,n}}{w_h}
  +\lambda^{-1}\ip{e_\xi^{h,n}}{w_h}
  -\alpha\lambda^{-1}\ip{e_p^{h,n}}{w_h}
  & = \mathcal R_\xi^n(w_h),
  \label{eq:appendix-discrete-xi}
  \\
  (c_0+\alpha^2\lambda^{-1})
  \ip{\bar{\partial}_t e_p^{h,n}}{q_h}
  -\alpha\lambda^{-1}
  \ip{\bar{\partial}_t e_\xi^{h,n}}{q_h}
  +\kappa\ip{\nabla e_p^{h,n}}{\nabla q_h}
  -L\ip{\bm e_E^{h,n}}{\nabla q_h}
  & = \mathcal R_p^n(q_h),
  \label{eq:appendix-discrete-p}
\end{align}
\end{subequations}
where the right-hand terms are:
\begin{subequations}
\begin{align*}
\mathcal R_E^n(\DD_h) & =  -\epsilon\ip{\bm\rho_E^n}{\DD_h}
  -\sigma\ip{\bm e_E^{I,n}}{\DD_h}
  +L\ip{\nabla e_p^{I,n}}{\DD_h},
\\ 
\mathcal R_H^n(\BB_h) & = -\mu\ip{\bm\rho_H^n}{\BB_h}
  -\ip{\curl\bm e_E^{I,n}}{\BB_h},
\\
\mathcal R_\xi^n(w_h) & = -\lambda^{-1}\ip{e_\xi^{I,n}}{w_h}
  +\alpha\lambda^{-1}\ip{e_p^{I,n}}{w_h},
\\
\mathcal R_p^n(q_h) & = -(c_0+\alpha^2\lambda^{-1})\ip{\rho_p^n}{q_h}
  +\alpha\lambda^{-1}\ip{\rho_\xi^n}{q_h}
  +L\ip{\bm e_E^{I,n}}{\nabla q_h}.
\end{align*}
\end{subequations}
We then apply the BDF2 difference operator to
\eqref{eq:appendix-discrete-xi} and obtain
\begin{align}
  \ip{\diver\bar{\partial}_t\bm e_u^{h,n}}{w_h}
  +\lambda^{-1}
  \ip{\bar{\partial}_t e_\xi^{h,n}}{w_h}
  -\alpha\lambda^{-1}
  \ip{\bar{\partial}_t e_p^{h,n}}{w_h}
  &=
  \bar{\partial}_t\mathcal R_\xi^n(w_h).
  \label{eq:appendix-differentiated-xi}
\end{align}
We choose
\(
  (\DD_h,\BB_h,\vv_h,w_h,q_h)
  =
  (\bm e_E^{h,n},
    \bm e_H^{h,n},
    \bar{\partial}_t\bm e_u^{h,n},
    e_\xi^{h,n},
    e_p^{h,n})
\)
in \eqref{eq:appendix-discrete-e},
\eqref{eq:appendix-discrete-h},
\eqref{eq:appendix-discrete-u},
\eqref{eq:appendix-differentiated-xi}, and
\eqref{eq:appendix-discrete-p}, respectively. Summing the resulting
identities, the Maxwell curl terms and the divergence terms cancel, and we obtain
\begin{align}
&\epsilon\ip{\bar{\partial}_t\bm e_E^{h,n}}{\bm e_E^{h,n}}
+\mu\ip{\bar{\partial}_t\bm e_H^{h,n}}{\bm e_H^{h,n}}
+G\ip{\nabla\bar{\partial}_t\bm e_u^{h,n}}
       {\nabla\bm e_u^{h,n}}
+c_0\ip{\bar{\partial}_t e_p^{h,n}}{e_p^{h,n}}
\notag\\
&\quad
+\lambda^{-1}
\ip{\bar{\partial}_t(e_\xi^{h,n}-\alpha e_p^{h,n})}
   {e_\xi^{h,n}-\alpha e_p^{h,n}}
+\sigma\lnorm{\bm e_E^{h,n}}^2
+\kappa\lnorm{\nabla e_p^{h,n}}^2
-2L\ip{\bm e_E^{h,n}}{\nabla e_p^{h,n}}
\notag\\
&=
\mathcal R_E^n(\bm e_E^{h,n})
+\mathcal R_H^n(\bm e_H^{h,n})
+\bar{\partial}_t\mathcal R_\xi^n(e_\xi^{h,n})
+\mathcal R_p^n(e_p^{h,n}).
\label{eq:appendix-error-energy}
\end{align}
We then apply Cauchy--Schwarz inequality to derive
\begin{align}
\left(1-\frac{L}{\sqrt{\sigma\kappa}} \right)\mathcal V_h(\bm e_E^{h,n},e_p^{h,n})
\leq
\mathcal V_h(\bm e_E^{h,n},e_p^{h,n})
-2L\ip{\bm e_E^{h,n}}{\nabla e_p^{h,n}}.
\label{eq:appendix-coupled-coercivity}
\end{align}
For simplicity, set
\[
  e_h^n
  :=
  (\bm e_E^{h,n},\bm e_H^{h,n},\bm e_u^{h,n},
   e_\xi^{h,n},e_p^{h,n}).
\]
Applying \eqref{eq:bdf2-G-identity} and \eqref{eq:appendix-coupled-coercivity} to \eqref{eq:appendix-error-energy}, omitting the nonnegative
second-difference terms, multiplying by \(2\dt\), and summing over \(n=2,\ldots,m\), we obtain, for \(2\le m\le N\),
\begin{align}
&\mathcal E^{\mathcal G}_h(e_h^m)
+2\dt\left(1-\frac{L}{\sqrt{\sigma\kappa}}\right)
\sum_{n=2}^m
  \mathcal V_h(\bm e_E^{h,n},e_p^{h,n})
\notag\\
&\le
\mathcal E^{\mathcal G}_h(e_h^1)
+2\dt\sum_{n=2}^m
\left(
  \mathcal R_E^n(\bm e_E^{h,n})
  +\mathcal R_H^n(\bm e_H^{h,n})
  +\bar{\partial}_t\mathcal R_\xi^n(e_\xi^{h,n})
  +\mathcal R_p^n(e_p^{h,n})
\right).
\label{eq:appendix-summed-error-energy}
\end{align}
To apply Lemma \ref{lem:discrete-non-gronwall}, we define 
\begin{align*}
  Y_m^2
  :={}&
  \mathcal E^{\mathcal G}_h(e_h^m)
  +\dt\left(1-\frac{L}{\sqrt{\sigma\kappa}}\right)
  \sum_{n=2}^m
  \mathcal V_h(\bm e_E^{h,n},e_p^{h,n}), \qquad Y_1^2 := \mathcal E^{\mathcal G}_h(e_h^1),
\\
D_m
:={}&
C\dt\sum_{n=2}^m
\bigg(
\frac{
  (
    \epsilon\lnorm{\bm\rho_E^n}
    +\sigma\lnorm{\bm e_E^{I,n}}
    +L\lnorm{\nabla e_p^{I,n}}
  )^2
}{
  (1-L/\sqrt{\sigma\kappa})\sigma
}
\notag\\
&\qquad\qquad
+
\frac{
  (
    (c_0+\alpha^2\lambda^{-1})\lnorm{\rho_p^n}
    +\alpha\lambda^{-1}\lnorm{\rho_\xi^n}
    +L\lnorm{\bm e_E^{I,n}}
  )^2
}{
  (1-L/\sqrt{\sigma\kappa})\kappa
}
\bigg),\\
F_n
:={}&
C\bigg(
  \mu\lnorm{\bm\rho_H^n}
  +\lnorm{\curl\bm e_E^{I,n}}
  +\lambda^{-1}\lnorm{\bar{\partial}_t e_\xi^{I,n}}
  +\alpha\lambda^{-1}\lnorm{\bar{\partial}_t e_p^{I,n}}
\bigg).
\end{align*}
For the right-hand side of \eqref{eq:appendix-summed-error-energy}, we apply the Cauchy--Schwarz, Young, and Poincar{\'e} inequalities to obtain 
\begin{align}
& 2\dt\sum_{n=2}^m
\left(
  \mathcal R_E^n(\bm e_E^{h,n})
  +\mathcal R_p^n(e_p^{h,n})
\right)
 \le
\dt
\left(1-\frac{L}{\sqrt{\sigma\kappa}}\right)
\sum_{n=2}^m
\mathcal V_h(\bm e_E^{h,n},e_p^{h,n})
+D_m .
\label{eq:appendix-dissipative-residual-sum}
\\
& 2\dt\sum_{n=2}^m
\left(
  \mathcal R_H^n(\bm e_H^{h,n})
  +\bar{\partial}_t\mathcal R_\xi^n(e_\xi^{h,n})
\right)
 \le
\dt\sum_{n=2}^mF_nY_n .
\label{eq:appendix-nondissipative-residual-sum}
\end{align}
Combining \eqref{eq:appendix-summed-error-energy},
\eqref{eq:appendix-dissipative-residual-sum}, and
\eqref{eq:appendix-nondissipative-residual-sum}, we obtain
\begin{align}
Y_m^2 & = \mathcal E^{\mathcal G}_h(e_h^m)
+\dt
\left(1-\frac{L}{\sqrt{\sigma\kappa}}\right)
\sum_{n=2}^m
\mathcal V_h(\bm e_E^{h,n},e_p^{h,n})
\notag \\ & \le
\mathcal E^{\mathcal G}_h(e_h^1)
+D_m+\dt\sum_{n=2}^mF_nY_n = Y_1^2 + D_m +\dt\sum_{n=2}^mF_nY_n .
\label{eq:appendix-summed-energy}
\end{align}
Since \(D_m\) is non-negative and nondecreasing and \(F_n\ge0\),
applying Lemma~\ref{lem:discrete-non-gronwall} to
\eqref{eq:appendix-summed-energy} yields
\begin{equation}
  Y_m^2
  \lesssim
  Y_1^2+D_m+\dt\sum_{n=2}^mF_n^2,
  \qquad 2\le m\le N.
  \label{eq:appendix-non-gronwall-bound}
\end{equation}
The consistency estimate \eqref{eq:appendix-rho-estimate}, together with
the projection estimates
\eqref{eq:hcurl-projection-estimate}, \eqref{eq:l2-h-projection-estimate},
\eqref{eq:stokes-projection-estimate}, and
\eqref{eq:pressure-elliptic-estimate} applied also to first time derivatives, gives
\begin{align*}
D_m & \lesssim \dt\sum_{n=2}^m \left( \lnorm{\bm\rho_E^n}^2 + \lnorm{\rho_\xi^n}^2 + \lnorm{\rho_p^n}^2 + \lnorm{\bm e_E^{I,n}}^2
  +\lnorm{\nabla e_p^{I,n}}^2 \right) 
\notag \\
&\lesssim
\dt^4 \left[\|\partial_t^3\E\|_{L^2(0,T_f;\bm L^2)}^2 + \|\partial_t^3\xi\|_{L^2(0,T_f;L^2)}^2 + \|\partial_t^3p\|_{L^2(0,T_f;L^2)}^2 \right]
\notag \\
& \quad + h^4 \left[ \|\partial_t\E\|_{L^2(0,T_f;\bm H^3)}^2
+ \|\partial_t\xi\|_{L^2(0,T_f;H^2)}^2
+ \|\partial_t p\|_{L^2(0,T_f;H^3)}^2 \right]
\notag \\
& \quad + h^4T_f
\left[
  \|\E\|_{L^\infty(0,T_f;\bm H^3)}^2
  +\|p\|_{L^\infty(0,T_f;H^3)}^2
\right],
\end{align*}
and
\begin{align*}
\dt \sum_{n=2}^m F_n^2
& \lesssim
\dt \sum_{n=2}^m
\left(
  \lnorm{\bm\rho_H^n}^2
  +\lnorm{\curl\bm e_E^{I,n}}^2
  +\lnorm{\bar{\partial}_t e_\xi^{I,n}}^2
  +\lnorm{\bar{\partial}_t e_p^{I,n}}^2
\right)
\notag \\
& \lesssim
  \dt^4\|\partial_t^3\HH\|_{L^2(0,T_f;\bm L^2)}^2
  +h^4\|\partial_t\HH\|_{L^2(0,T_f;\bm H^2)}^2
\notag \\
& \quad + h^4
  \|\partial_t\xi\|_{L^2(0,T_f;H^2)}^2
  + h^4 \|\partial_t p\|_{L^2(0,T_f;H^3)}^2
+
h^4 T_f \|\E\|_{L^\infty(0,T_f;\bm H^3)}^2.
\end{align*}
It follows from \eqref{eq:appendix-non-gronwall-bound} that
\begin{align}
& \max_{2\le n\le N}
  \mathcal E^{\mathcal G}_h(e_h^n)
+\dt\left(1-\frac{L}{\sqrt{\sigma\kappa}}\right)
\sum_{n=2}^N
\mathcal V_h(\bm e_E^{h,n},e_p^{h,n})
\lesssim \mathcal E^{\mathcal G}_h(e_h^1) + 
(h^2+\dt^2)^2.
\label{eq:appendix-discrete-error-estimate}
\end{align}
The discrete inf--sup condition and
\eqref{eq:appendix-discrete-u} imply
\begin{align}
\tilde\beta\lnorm{e_\xi^{h,n}}
&\le
\sup_{\bm0\ne\vv_h\in\Uh}
\frac{\ip{e_\xi^{h,n}}{\diver\vv_h}}{\hnorm{\vv_h}}
=
\sup_{\bm0\ne\vv_h\in\Uh}
\frac{G\ip{\nabla\bm e_u^{h,n}}{\nabla\vv_h}}
     {\hnorm{\vv_h}}
\le
G\lnorm{\nabla\bm e_u^{h,n}}.
\label{eq:appendix-xi-infsup}
\end{align}
Combining \eqref{eq:appendix-discrete-error-estimate} with
\eqref{eq:appendix-xi-infsup}, and then applying the projection estimates and
the triangle inequality to the error decomposition, we obtain the total error estimate. Recalling the $\mathcal G$-norm equivalence \eqref{eq:G-norm-equivalence} and the starting-value assumption, we obtain \eqref{eq:monolithic-bdf2-error}.
\end{proof}


\subsection{Iterative decoupling approximation}

We now describe the iterative decoupling approximation for the monolithic system at the time level $t_n$. Throughout the iterative scheme, we use the notation \((\E_h^{n,i},\HH_h^{n,i},\uu_h^{n,i},\xi_h^{n,i},p_h^{n,i})\) for the corresponding discrete variables, where the superscripts \(n\) and \(i\) denote the time level \(t_n\) and the iteration index, respectively.
Since the BDF2 method is used, the decoupled scheme requires the already computed solutions at the two previous time levels. In addition, the iteration is initialized by the solution from the previous time level:
\[
  \begin{aligned}
  \E_h^{n,0}&=\E_h^{n-1},&
  \HH_h^{n,0}&=\HH_h^{n-1},&
  \uu_h^{n,0}&=\uu_h^{n-1},&
  \xi_h^{n,0}&=\xi_h^{n-1},&
  p_h^{n,0}&=p_h^{n-1}.
  \end{aligned}
\]
For a generic iterate \(\chi_h^{n,i}\), we define the BDF2 derivative as follows.
\[
  \bar{\partial}_t \chi_h^{n,i}
  :=
  \frac{3\chi_h^{n,i}-4\chi_h^{n-1}+\chi_h^{n-2}}{2\dt}.
\]
Each iteration performs two block solves in sequence: first the electromagnetic solve, followed by the poroelastic solve.

\paragraph{Step 1: Electromagnetic subproblem.}
Given the pressure $p_h^{n,i-1}$ from the previous iteration, find
$(\E_h^{n,i},\HH_h^{n,i})\in\Eh\times\Hh$ such that, for all
$(\DD_h,\BB_h)\in\Eh\times\Hh$,
\begin{subequations}
\begin{align}
  \epsilon\ip{\bar{\partial}_t\E_h^{n,i}}{\DD_h}
  +\sigma\ip{\E_h^{n,i}}{\DD_h}
  -\ip{\HH_h^{n,i}}{\curl\DD_h}
  &=
  L\ip{\nabla p_h^{n,i-1}}{\DD_h}
  +\ip{\hh^n}{\DD_h},
  \label{eq:iter-e}\\
  \mu\ip{\bar{\partial}_t\HH_h^{n,i}}{\BB_h}
  +\ip{\curl\E_h^{n,i}}{\BB_h}
  &=0.
  \label{eq:iter-h}
\end{align}
\end{subequations}

\paragraph{Step 2: Poroelastic subproblem.}
With the updated electric field $\E_h^{n,i}$, find
$(\uu_h^{n,i},\xi_h^{n,i},p_h^{n,i})\in\Uh\times\Wh\times\Ph$ such that, for all
$(\vv_h,w_h,q_h)\in\Uh\times\Wh\times\Ph$,
\begin{subequations}
\begin{align}
  G\ip{\nabla\uu_h^{n,i}}{\nabla\vv_h}
  -\ip{\xi_h^{n,i}}{\diver\vv_h}
  &=
  \ip{\ff^n}{\vv_h},
  \label{eq:iter-u}\\
  \ip{\diver\uu_h^{n,i}}{w_h}
  +\lambda^{-1}\ip{\xi_h^{n,i}}{w_h}
  -\alpha\lambda^{-1}\ip{p_h^{n,i}}{w_h}
  &=0,
  \label{eq:iter-xi}\\
  (c_0+\alpha^2\lambda^{-1})
  \ip{\bar{\partial}_t p_h^{n,i}}{q_h}
  -\alpha\lambda^{-1}
  \ip{\bar{\partial}_t\xi_h^{n,i}}{q_h}
  +\kappa\ip{\nabla p_h^{n,i}}{\nabla q_h}
  &=
  L\ip{\E_h^{n,i}}{\nabla q_h}
  +\ip{g^n}{q_h}.
  \label{eq:iter-p}
\end{align}
\end{subequations}
In the computations, the iteration is terminated when either a prescribed maximum number of iterations is reached or the relative change between two successive iterates falls below a chosen tolerance. Upon termination at iteration \(i=I\), we accept the final iterate as the numerical solution at time \(t_n\), namely,
\[
  (\E_h^n,\HH_h^n,\uu_h^n,\xi_h^n,p_h^n)
  :=
  (\E_h^{n,I},\HH_h^{n,I},\uu_h^{n,I},\xi_h^{n,I},p_h^{n,I}),
\]
and proceed to the next time level. The following result establishes the geometric convergence of the decoupled iterates to the monolithic BDF2 solution.

\begin{theorem}
\label{thm:iteration-contraction} 
Let $\{(\E_h^{n,i},\HH_h^{n,i},\uu_h^{n,i},\xi_h^{n,i},p_h^{n,i})\}_{i \geq 1} \subset \mathbb S_h$
be the sequence generated by \eqref{eq:iter-e}--\eqref{eq:iter-p}, and let
$(\E_h^n,\HH_h^n,\uu_h^n,\xi_h^n,p_h^n)\in \mathbb S_h$
be the solution of the monolithic problem
\eqref{eq:mono-e}--\eqref{eq:mono-p}.
Denote the iterative errors by
\[
(\bm e_E^{n,i},\bm e_H^{n,i},\bm e_u^{n,i},e_\xi^{n,i},e_p^{n,i})
:=
(
\E_h^{n,i}-\E_h^n,
\HH_h^{n,i}-\HH_h^n,
\uu_h^{n,i}-\uu_h^n,
\xi_h^{n,i}-\xi_h^n,
p_h^{n,i}-p_h^n
)
\]
Then, for all $i \geq 1$,
\begin{equation}
  \lnorm{\nabla e_p^{n,i}}
  \le
  \frac{L^2}
  {\kappa (\sigma+ \tfrac{ 3\epsilon}{2\dt} )} \lnorm{\nabla e_p^{n,i-1}},
  \label{eq:p-error-contraction}
\end{equation}
where the multiplier is strictly smaller than one, since $L<\sqrt{\sigma \kappa}$. Consequently the iteration is contractive, and 
$(\lnorm{\bm e_E^{n,i}},
\lnorm{\bm e_H^{n,i}},
\hnorm{\bm e_u^{n,i}},
\lnorm{e_\xi^{n,i}},
\hnorm{e_p^{n,i}}) \to \bm{0}$ geometrically as $i \to \infty$.
\end{theorem}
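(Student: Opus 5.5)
Subtracting the monolithic equations \eqref{eq:mono-e}--\eqref{eq:mono-p} from the iterative ones \eqref{eq:iter-e}--\eqref{eq:iter-p} gives the error equations. The data \(\hh^n,\ff^n,g^n\) cancel, and so do the history values \(\chi_h^{n-1},\chi_h^{n-2}\), so \(\bar\partial_t\) acting on an error reduces to \(\tfrac{3}{2\dt}\) times that error:
\begin{align*}
  \Bigl(\sigma+\tfrac{3\epsilon}{2\dt}\Bigr)\ip{\bm e_E^{n,i}}{\DD_h}-\ip{\bm e_H^{n,i}}{\curl\DD_h}&=L\ip{\nabla e_p^{n,i-1}}{\DD_h},\\
  \tfrac{3\mu}{2\dt}\ip{\bm e_H^{n,i}}{\BB_h}+\ip{\curl\bm e_E^{n,i}}{\BB_h}&=0,
\end{align*}
together with the poroelastic error equations driven by \(L\ip{\bm e_E^{n,i}}{\nabla q_h}\).

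\textbf{Step 1 (electromagnetic block).} We test with \(\DD_h=\bm e_E^{n,i}\) and \(\BB_h=\bm e_H^{n,i}\) and add the two equations; the curl terms cancel. This yields
\[
\Bigl(\sigma+\tfrac{3\epsilon}{2\dt}\Bigr)\lnorm{\bm e_E^{n,i}}^2+\tfrac{3\mu}{2\dt}\lnorm{\bm e_H^{n,i}}^2=L\ip{\nabla e_p^{n,i-1}}{\bm e_E^{n,i}}.
\]
Dropping the \(\bm e_H\) term and applying Cauchy--Schwarz gives
\[
\lnorm{\bm e_E^{n,i}}\le \frac{L}{\sigma+3\epsilon/(2\dt)}\lnorm{\nabla e_p^{n,i-1}}.
\]
Keeping the \(\bm e_H\) term instead bounds \(\lnorm{\bm e_H^{n,i}}\) by \(\lnorm{\nabla e_p^{n,i-1}}\) with a \(\dt\)-dependent but mesh-independent constant.

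\textbf{Step 2 (poroelastic block).} In \eqref{eq:iter-u} minus \eqref{eq:mono-u} we take \(\vv_h=\bm e_u^{n,i}\). In the difference of the constraint equations we take \(w_h=e_\xi^{n,i}\), and in the difference of the mass equations we take \(q_h=e_p^{n,i}\), after multiplying the latter by \(2\dt/3\) so that its time-derivative factor becomes \(1\). Adding the three identities, the divergence terms cancel. Using the identity \eqref{eq:pressure_total_pressure_identity} with time derivatives replaced by the errors themselves, this gives
\[
\tfrac{2\dt}{3}\Bigl[G\lnorm{\nabla\bm e_u}^2+\lambda^{-1}\lnorm{e_\xi-\alpha e_p}^2+c_0\lnorm{e_p}^2\Bigr]\cdot\tfrac{3}{2\dt}+\kappa\lnorm{\nabla e_p^{n,i}}^2=L\ip{\bm e_E^{n,i}}{\nabla e_p^{n,i}}.
\]
More precisely, the mass equation is tested with \(q_h=e_p\), and the constraint equation is multiplied by \(3/(2\dt)\) and tested with \(e_\xi\); the combination is arranged so that every bracketed term is nonnegative. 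Dropping these terms and applying Cauchy--Schwarz gives \(\kappa\lnorm{\nabla e_p^{n,i}}\le L\lnorm{\bm e_E^{n,i}}\).

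\textbf{Combining the steps.} Inserting the bound from Step 1 into Step 2 gives \eqref{eq:p-error-contraction}. The contraction factor is \(L^2/(\kappa(\sigma+3\epsilon/(2\dt)))<L^2/(\sigma\kappa)<1\) by Assumption~\ref{ass:a-priori}, and it does not depend on \(h\). Geometric decay of \(\lnorm{\nabla e_p^{n,i}}\) then transfers to the other errors:
\begin{itemize}
\item \(\bm e_E^{n,i}\) and \(\bm e_H^{n,i}\) through Step 1;
\item \(\lnorm{e_p^{n,i}}\), hence \(\hnorm{e_p^{n,i}}\), through the Poincar\'e inequality;
\item \(G\lnorm{\nabla\bm e_u^{n,i}}^2\) through the Step 2 identity, which bounds it by \(\lnorm{\nabla e_p^{n,i}}\) times a constant;
\item \(\lnorm{e_\xi^{n,i}}\) through the discrete inf--sup condition \eqref{disc:infsup}, exactly as in \eqref{eq:appendix-xi-infsup}.
\end{itemize}

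\textbf{Main obstacle.} The delicate part is the sign bookkeeping in Step 2. The \(\lambda^{-1}\) and \(\alpha\) cross terms must assemble into the perfect square \(\lambda^{-1}\lnorm{e_\xi-\alpha e_p}^2\), with the \(\tfrac{3}{2\dt}\) scalings matched between the constraint and mass equations. Because the constraint is algebraic, it will be used undifferentiated, scaled by \(3/(2\dt)\), and tested with \(e_\xi^{n,i}\). The mass equation is tested with \(e_p^{n,i}\), and the momentum equation with \(\tfrac{3}{2\dt}\bm e_u^{n,i}\). With this choice the cross terms \(\pm\tfrac{3}{2\dt}\ip{\diver\bm e_u}{e_\xi}\) cancel, and \(\alpha^2\lambda^{-1}\lnorm{e_p}^2-2\alpha\lambda^{-1}\ip{e_\xi}{e_p}+\lambda^{-1}\lnorm{e_\xi}^2\) forms the square. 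I would verify this expansion carefully.
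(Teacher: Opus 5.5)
Your proposal is correct and follows the paper's proof essentially step for step. It uses the same error equations and the same electromagnetic energy test, which gives $\lnorm{\bm e_E^{n,i}}\le \frac{L}{\sigma+3\epsilon/(2\tau)}\lnorm{\nabla e_p^{n,i-1}}$. Its poroelastic test is the paper's choice $(\bm e_u^{n,i},e_\xi^{n,i},\tfrac{2\tau}{3}e_p^{n,i})$, which is your ``Main obstacle'' arrangement scaled by $\tfrac{2\tau}{3}$, and it gives $\kappa\lnorm{\nabla e_p^{n,i}}\le L\lnorm{\bm e_E^{n,i}}$. The Poincar\'e and discrete inf--sup transfers to the remaining fields are also the same.

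The only slip is the first display in your Step 2: there the bracketed terms should carry the factor $\tfrac{3}{2\tau}$, not $\tfrac{2\tau}{3}\cdot\tfrac{3}{2\tau}=1$. This is harmless for the contraction, since those terms are dropped, and it only changes the $\tau$-dependent (still mesh-independent) constant in your bound for $\nabla\bm e_u^{n,i}$.
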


\begin{proof}
We divide the proof into three steps.

\medskip
\noindent
\textbf{1. Electromagnetic error equations.}
Subtracting \eqref{eq:mono-e}--\eqref{eq:mono-h} from
\eqref{eq:iter-e}--\eqref{eq:iter-h} gives
\begin{align}
  (\sigma+\tfrac{3\epsilon}{2\dt})
  \ip{\bm e_E^{n,i}}{\DD_h}
  -\ip{\bm e_H^{n,i}}{\curl\DD_h}
  &=
  L\ip{\nabla e_p^{n,i-1}}{\DD_h},
  \label{eq:em-error-1}\\
  \tfrac{3\mu}{2\dt}
  \ip{\bm e_H^{n,i}}{\BB_h}
  +\ip{\curl\bm e_E^{n,i}}{\BB_h}
  &=0.
  \label{eq:em-error-2}
\end{align}
Taking \((\DD_h,\BB_h)=(\bm{e}_E^{n,i},\bm{e}_H^{n,i})\) and adding \eqref{eq:em-error-1} and \eqref{eq:em-error-2}, we obtain
\begin{align}
  (\sigma+\tfrac{3\epsilon}{2\dt})
  \lnorm{\bm e_E^{n,i}}^2
  +\tfrac{3\mu}{2\dt}\lnorm{\bm e_H^{n,i}}^2
  =
  L\ip{\nabla e_p^{n,i-1}}{\bm e_E^{n,i}}.
  \label{eq:em-error-energy}
\end{align}
Applying the Cauchy--Schwarz and Young inequalities to \eqref{eq:em-error-energy} implies
\begin{align}
  & (\sigma+\tfrac{3\epsilon}{2\dt} )
  \lnorm{\bm e_E^{n,i}}^2
  +\tfrac{3\mu}{2\dt}\lnorm{\bm e_H^{n,i}}^2
   \le
  \frac{L^2}
  {2(\sigma+\tfrac{3\epsilon}{2\dt})}
  \lnorm{\nabla e_p^{n,i-1}}^2 + \frac{1}{2} (\sigma+\tfrac{3\epsilon}{2\dt} )
  \lnorm{\bm e_E^{n,i}}^2.
  \label{eq:em-error-bound}
\end{align}
Dropping the non-negative term, we derive
\begin{equation}
  \lnorm{\bm e_E^{n,i}}
  \le
  \frac{L}
  {\sigma+\tfrac{3\epsilon}{2\dt}}
  \lnorm{\nabla e_p^{n,i-1}}.
  \label{eq:E-error-bound}
\end{equation}

\medskip
\noindent
\textbf{2. Poroelastic error equations.}
Subtracting \eqref{eq:mono-u}--\eqref{eq:mono-p} from
\eqref{eq:iter-u}--\eqref{eq:iter-p} gives
\begin{align}
  G\ip{\nabla\bm e_u^{n,i}}{\nabla\vv_h}
  -\ip{e_\xi^{n,i}}{\diver\vv_h}
  &=0,
  \label{eq:poro-error-u}\\
  \ip{\diver\bm e_u^{n,i}}{w_h}
  +\lambda^{-1}\ip{e_\xi^{n,i}}{w_h}
  -\alpha\lambda^{-1}\ip{e_p^{n,i}}{w_h}
  &=0,
  \label{eq:poro-error-xi}\\
  \tfrac{3}{2\dt}
  (c_0+\alpha^2\lambda^{-1})
  \ip{e_p^{n,i}}{q_h}
  -\alpha\lambda^{-1} \tfrac{3}{2\dt}
  \ip{e_\xi^{n,i}}{q_h}
  +\kappa\ip{\nabla e_p^{n,i}}{\nabla q_h}
  &=
  L\ip{\bm e_E^{n,i}}{\nabla q_h}.
  \label{eq:poro-error-p}
\end{align}
Choosing \((\vv_h,w_h,q_h)=(\bm{e}_u^{n,i},e_\xi^{n,i}, \tfrac{2\dt}{3} e_p^{n,i})\) and 
summing \eqref{eq:poro-error-u}--\eqref{eq:poro-error-p}, we obtain
\begin{align}
  &G\lnorm{\nabla\bm e_u^{n,i}}^2
  +\lambda^{-1}
  \lnorm{e_\xi^{n,i}-\alpha e_p^{n,i}}^2
  +c_0\lnorm{e_p^{n,i}}^2
  +\tfrac{2\dt\kappa}{3}\lnorm{\nabla e_p^{n,i}}^2 =
  \tfrac{2\dt L}{3}
  \ip{\bm e_E^{n,i}}{\nabla e_p^{n,i}}.
  \label{eq:poro-error-energy}
\end{align}
Applying the Cauchy--Schwarz and Young inequalities to \eqref{eq:poro-error-energy} implies
\begin{align}
  & G\lnorm{\nabla\bm e_u^{n,i}}^2
  +\lambda^{-1}
  \lnorm{e_\xi^{n,i}-\alpha e_p^{n,i}}^2
  +c_0\lnorm{e_p^{n,i}}^2
  +\tfrac{2\dt\kappa}{3}\lnorm{\nabla e_p^{n,i}}^2 
  \notag \\
  & \leq 
  \tfrac{\dt L^2}{3\kappa}
  \lnorm{\bm e_E^{n,i}}^2 + \tfrac{\dt\kappa}{3}\lnorm{\nabla e_p^{n,i}}^2.
  \label{eq:poro-error-bound}
\end{align}
Dropping the non-negative terms gives
\begin{equation}
  \lnorm{\nabla e_p^{n,i}}
  \le
  \frac{L}{\kappa}\lnorm{\bm e_E^{n,i}}.
  \label{eq:p-E-error-bound}
\end{equation}

\medskip
\noindent
\textbf{3. Contraction and convergence.}
Combining \eqref{eq:E-error-bound} and \eqref{eq:p-E-error-bound} yields
\begin{equation}
  \lnorm{\nabla e_p^{n,i}}
  \le
  \frac{L}{\kappa}\lnorm{\bm e_E^{n,i}}\le
  \frac{L^2}
  {\kappa(\sigma+\tfrac{3\epsilon}{2\dt})}
  \lnorm{\nabla e_p^{n,i-1}},
  \label{eq:p-error-geometric}
\end{equation}
which proves \eqref{eq:p-error-contraction}. 
The discrete inf--sup condition \eqref{disc:infsup} and
\eqref{eq:poro-error-u} further imply
\begin{equation}
  \tilde\beta\lnorm{e_\xi^{n,i}}
  \le
  \sup_{\bm 0\ne\vv_h\in\Uh}
  \frac{\ip{e_\xi^{n,i}}{\diver\vv_h}}{\hnorm{\vv_h}}
  =
  \sup_{\bm 0\ne\vv_h\in\Uh}
  \frac{G\ip{\nabla\bm e_u^{n,i}}{\nabla\vv_h}}{\hnorm{\vv_h}}
  \le
  G\lnorm{\nabla\bm e_u^{n,i}}. \label{eq:iter-infsup}
\end{equation}
Substituting \eqref{eq:p-error-geometric} into \eqref{eq:em-error-bound} and
\eqref{eq:poro-error-bound} together with the Poincar{\'e} inequality and \eqref{eq:iter-infsup} yields the
corresponding geometric decay of \(\bm e_E^{n,i}\), \(\bm e_H^{n,i}\),
\(\nabla\bm e_u^{n,i}\), \(e_\xi^{n,i}\), and \(e_p^{n,i}\).
This proves the geometric convergence of the iterative scheme to the monolithic solution.
\end{proof}

Together with Theorem~\ref{thm:monolithic-bdf2-error}, this contraction result shows that the iterative solution retains the same finite element accuracy as the monolithic BDF2 solution once the iterative error is reduced below the discretization error.  
Notice also that the contraction factor in \eqref{eq:p-error-contraction} depends on the time step \(\dt\); in particular, for fixed physical parameters, smaller time steps lead to a smaller sufficient contraction factor.

\subsection{Reduced form of the electromagnetic iteration}
\label{sec:reduced-em-iteration}

The electromagnetic part of the iterative decoupling admits an exact algebraic reduction. This reduction both simplifies the implementation and reveals the structure of the pressure-to-electric-field coupling. The key observation is that the discrete pressure gradient is represented exactly in the electric-field space. Indeed, let $p_h\in\Ph$. Since $p_h|_K\in P_2(K)$ for any $K\in\mathcal T_h$, we have $\nabla p_h|_K\in\bm P_1(K)\subset\bm R_2(K)$. 
Moreover, \(p_h\) is globally \(H^1\)-conforming, and hence its trace is continuous across interior faces, which leads to $\nabla p_h\in \bm H(\mathrm{curl};\OmegaD)$. Since $p_h\in H_0^1(\OmegaD)$, its trace is constant on each boundary face,
and therefore the tangential derivative vanishes on $\partial\OmegaD$, i.e., \((\nabla p_h)\times\bn=\bm0\) on \(\partial\OmegaD\). Consequently, \(\nabla p_h\in\Eh\), \(\curl\nabla p_h=\bm0\), and the pressure contribution in the electromagnetic subproblem is an exact discrete gradient field in the N\'ed\'elec space.

To make this observation precise, we subtract \eqref{eq:iter-e}--\eqref{eq:iter-h} at the first iteration from the same equations at iteration \(i>1\). We then obtain, for all \((\DD_h,\BB_h)\in\Eh\times\Hh\),
\begin{align}
  (\sigma+\tfrac{3\epsilon}{2\dt})
  \ip{\E_h^{n,i}-\E_h^{n,1}}{\DD_h}
  -\ip{\HH_h^{n,i}-\HH_h^{n,1}}{\curl\DD_h}
  &=
  L\ip{\nabla(p_h^{n,i-1}-p_h^{n,0})}{\DD_h},
  \label{eq:reduced-em-difference-e}
  \\
  \tfrac{3\mu}{2\dt}
  \ip{\HH_h^{n,i}-\HH_h^{n,1}}{\BB_h}
  +\ip{\curl(\E_h^{n,i}-\E_h^{n,1})}{\BB_h}
  &=0.
  \label{eq:reduced-em-difference-h}
\end{align}
Since \(\nabla(p_h^{n,i-1}-p_h^{n,0})\in\Eh\) and
\(\curl\nabla(p_h^{n,i-1}-p_h^{n,0})=0\), testing
\eqref{eq:reduced-em-difference-e} and \eqref{eq:reduced-em-difference-h} with
\[
  \DD_h
  =
  \E_h^{n,i}-\E_h^{n,1}
  -
  \frac{L}{\sigma+\frac{3\epsilon}{2\dt}}
  \nabla(p_h^{n,i-1}-p_h^{n,0}), \qquad \BB_h=\HH_h^{n,i}-\HH_h^{n,1},
\]
respectively, we derive
\[
  (\sigma+\tfrac{3\epsilon}{2\dt})
  \bigg\|
  \E_h^{n,i}-\E_h^{n,1}
  -
  \frac{L}{\sigma+\frac{3\epsilon}{2\dt}}
  \nabla(p_h^{n,i-1}-p_h^{n,0})
  \bigg\|_{L^2}^2
  +
  \tfrac{3\mu}{2\dt}
  \|\HH_h^{n,i}-\HH_h^{n,1}\|_{L^2}^2
  =0 .
\]
The same identity holds trivially for \(i=1\). Consequently, for all \(i\ge1\),
\begin{equation}
  \E_h^{n,i}
  =
  \E_h^{n,1}
  +
  \frac{L}{\sigma+\frac{3\epsilon}{2\dt}}
  \nabla(p_h^{n,i-1}-p_h^{n,0}),
  \qquad
  \HH_h^{n,i}=\HH_h^{n,1}.
  \label{eq:reduced-em-explicit-update}
\end{equation}

Formula \eqref{eq:reduced-em-explicit-update} shows that, after the first electromagnetic solve at time \(t_n\), the magnetic field remains fixed, whereas the electric field is updated by a curl-free pressure-gradient correction. Substituting this representation into the mass equation yields the pressure-feedback term in \eqref{eq:riter-p}. Hence, no additional electromagnetic solve is required during the subsequent iterations. This leads to the following reduced scheme, which is algebraically equivalent to the original iterative decoupling method.

\paragraph{Initial electromagnetic solve.}
Given the initial pressure iterate \(p_h^{n,0}\), find
\((\E_h^{n,1},\HH_h^{n,1})\in\Eh\times\Hh\) such that, for all
\((\DD_h,\BB_h)\in\Eh\times\Hh\),
\begin{subequations}
\begin{align}
  \epsilon\ip{\bar{\partial}_t\E_h^{n,1}}{\DD_h}
  +\sigma\ip{\E_h^{n,1}}{\DD_h}
  -\ip{\HH_h^{n,1}}{\curl\DD_h}
  &=
  L\ip{\nabla p_h^{n,0}}{\DD_h}
  +\ip{\hh^n}{\DD_h},
  \label{eq:riter-e}\\
  \mu\ip{\bar{\partial}_t\HH_h^{n,1}}{\BB_h}
  +\ip{\curl\E_h^{n,1}}{\BB_h}
  &=0.
  \label{eq:riter-h}
\end{align}
\end{subequations}

\paragraph{Reduced poroelastic iteration.}
For \(i\ge1\), compute
\((\uu_h^{n,i},\xi_h^{n,i},p_h^{n,i})\in\Uh\times\Wh\times\Ph\) from
\begin{subequations}
\begin{align}
  G\ip{\nabla\uu_h^{n,i}}{\nabla\vv_h}
  -\ip{\xi_h^{n,i}}{\diver\vv_h}
  &=
  \ip{\ff^n}{\vv_h},
  \label{eq:riter-u}\\
   \ip{\diver\uu_h^{n,i}}{w_h}
  +\lambda^{-1}\ip{\xi_h^{n,i}}{w_h}
  -\alpha\lambda^{-1}\ip{p_h^{n,i}}{w_h}
  &=0,
  \label{eq:riter-xi}\\
  (c_0+\alpha^2\lambda^{-1})
  \ip{\bar{\partial}_t p_h^{n,i}}{q_h}
  -\alpha\lambda^{-1}
  \ip{\bar{\partial}_t\xi_h^{n,i}}{q_h}
  +\kappa\ip{\nabla p_h^{n,i}}{\nabla q_h} 
  &=
  L\ip{\E_h^{n,1}}{\nabla q_h}   \label{eq:riter-p}\\ 
  +
  \frac{L^2}{\sigma+\frac{3\epsilon}{2\dt}}
  &\ip{\nabla p_h^{n,i-1}-\nabla p_h^{n,0}}{\nabla q_h}
  +\ip{g^n}{q_h}, \notag
\end{align}
\end{subequations}
for all \((\vv_h,w_h,q_h)\in\Uh\times\Wh\times\Ph\). The 
electromagnetic fields are then recovered from
\eqref{eq:reduced-em-explicit-update}.
\\

Since the reduced scheme is algebraically equivalent to the original iteration, the convergence result of Theorem~\ref{thm:iteration-contraction} applies unchanged.

\begin{remark}[Reduced electromagnetic solve]
\label{rem:reduced-em-solve}
The initial electromagnetic solve in \eqref{eq:riter-e}--\eqref{eq:riter-h} can also be reduced to a single electric-field problem. Indeed, since \(\curl\Eh\subset\Hh\), equation \eqref{eq:riter-h} gives
\[
  \HH_h^{n,1}
  =
  \frac{4}{3}\HH_h^{n-1}-\frac{1}{3}\HH_h^{n-2}
  -
  \frac{2\dt}{3\mu}\curl\E_h^{n,1}.
\]
Substituting this expression into \eqref{eq:riter-e}, one obtains the curl--curl problem for \(\E_h^{n,1}\):
\[
\begin{aligned}
&(\sigma+\tfrac{3\epsilon}{2\dt})
\ip{\E_h^{n,1}}{\DD_h}
+
\tfrac{2\dt}{3\mu}
\ip{\curl\E_h^{n,1}}{\curl\DD_h}
\\
& =
L\ip{\nabla p_h^{n,0}}{\DD_h}
+
\ip{\hh^n}{\DD_h}
+
\tfrac{\epsilon}{2\dt}
\ip{4\E_h^{n-1}-\E_h^{n-2}}{\DD_h}
+
\ip{\tfrac{4}{3}\HH_h^{n-1}-\tfrac{1}{3}\HH_h^{n-2}}{\curl\DD_h}.
\end{aligned}
\]
Thus, the electromagnetic initialization at each BDF2 time level requires only one curl--curl solve for \(\E_h^{n,1}\), after which \(\HH_h^{n,1}\) is recovered explicitly from the formula above.
\end{remark}

\begin{remark}[Nested poroelastic decoupling]
In the present strategy, the poroelasticity block \eqref{eq:riter-u}--\eqref{eq:riter-p} is solved as one coupled subproblem after the electromagnetic update. This block may itself be further decoupled by existing algorithms for Biot's model \cite{gu2023iterative, cai2023some}. Such a choice would lead to a nested decoupling strategy, with the outer electromagnetic--poroelastic iteration combined with an inner poroelastic solver. The analysis of this nested strategy is beyond the scope of the present work.
\end{remark}

\section{Numerical experiments}
\label{sec:4}
This section presents numerical experiments for the proposed five-field BDF2 finite element method with the finite element spaces introduced in Section~\ref{STD}. 
The experiments include space--time convergence tests, parameter-dependent simulations, and comparisons between the original and reduced iterative formulations. A three-dimensional footing benchmark is also considered to demonstrate the locking-free performance of the method in the nearly incompressible regime. All computations are carried out using FEniCSx \cite{BarattaEtal2023} and multiphenicsx \cite{multiphenicsx}.

Unless otherwise stated, the linear systems are solved by sparse LU factorization with MUMPS through PETSc. For the finer-mesh convergence tests in Tables \ref{tab:spatial}–\ref{tab:parameter-robust}, we instead use PETSc’s GMRES implementation with ILU preconditioning. At each time level, the fixed-point iteration is terminated when the maximum relative update over all solution components satisfies
\[
  \delta_i^n
  :=
  \max_{\zeta_h\in\mathcal Z_h}
  \frac{
    \|\zeta_h^{n,i}-\zeta_h^{n,i-1}\|_{L^2}
  }{
    \|\zeta_h^{n,i}\|_{L^2}+\varepsilon_{\rm abs}
  }
  \le 10^{-10},
  \qquad
  \mathcal Z_h:=\{\E_h,\HH_h,\uu_h,\xi_h,p_h\},
\]
where \(\varepsilon_{\rm abs}>0\) is a small safeguard to avoid division by zero.

\subsection{Verification of convergence}
We consider a test with a manufactured solution on $\OmegaD = (0,1)^3$. Consistent with the mixed boundary setting for
the displacement, we take
$\Gamma_N^u:=\{(1,y,z):0\le y,z\le1\}$, 
$\Gamma_D^u:=\partial\OmegaD\setminus\Gamma_N^u$. 
We construct the analytical solution as follows:
\begin{align*}
  \E & =  \begin{bmatrix}
    \sin{(\pi x)} \sin{(\pi y)} \sin{(\pi z)} \\
    \sin{(\pi x)} \sin{(\pi y)} \sin{(\pi z)} \\
    \sin{(\pi x)} \sin{(\pi y)} \sin{(\pi z)} \\
    \end{bmatrix} e^t, 
  \\
  \HH & =   \begin{bmatrix}
    \sin{(\pi x)} \cos{(\pi y)} \sin{(\pi z)} - \sin{(\pi x)} \sin{(\pi y)} \cos{(\pi z)} \\
    \sin{(\pi x)} \sin{(\pi y)} \cos{(\pi z)} - \cos{(\pi x)} \sin{(\pi y)} \sin{(\pi z)} \\
    \cos{(\pi x)} \sin{(\pi y)} \sin{(\pi z)} - \sin{(\pi x)} \cos{(\pi y)} \sin{(\pi z)} \\
    \end{bmatrix} \left( -\frac{\pi}{\mu} \right)e^t, 
  \\
  \uu & =  \begin{bmatrix}
    \sin(\pi x)\cos(\pi y)\cos(\pi z)\\
    \cos(\pi x)\sin(\pi y)\cos(\pi z)\\
    \cos(\pi x)\cos(\pi y)\sin(\pi z)
  \end{bmatrix} xyz(1-x)^2(1-y)(1-z) e^t,
  \\
  p & = \sin{(\pi x)} \sin{(\pi y)} \sin{(\pi z)} e^t,
\end{align*}
The source terms \(\hh,\ff,g\) are obtained by substituting this exact
solution into the five-field equations \eqref{eq:five-e}--\eqref{eq:five-p}. Unless otherwise stated, the material parameters are
\begin{align}
  \epsilon = \sigma = \mu = \alpha = G = \lambda = c_0 = \kappa = 1.0, \quad L=0.5.
\label{baseline}
\end{align}

\paragraph{Temporal convergence} 
For the temporal test, the spatial mesh and final time \(T_f=1.0\) are fixed, and only the time step is refined for the monolithic approximation \eqref{eq:mono}.  Consequently, the final-time errors in Table~\ref{tab:time} mainly measure the time discretization error.  The convergence rates are close to the expected second order for all components, consistent with the BDF2 method.

\begin{table}[htbp]
\centering
\caption{Temporal convergence for the five-field monolithic BDF2 method.}
\label{tab:time}
\resizebox{\textwidth}{!}{%
\begin{tabular}{cccccc}
\toprule
\(\dt\)
& \(\lnorm{\E_{h,\dt}^N-\E_{h,\dt/2}^N}\)
& \(\lnorm{\HH_{h,\dt}^N-\HH_{h,\dt/2}^N}\)
& \(\hnorm{\uu_{h,\dt}^N-\uu_{h,\dt/2}^N}\)
& \(\lnorm{\xi_{h,\dt}^N-\xi_{h,\dt/2}^N}\)
& \(\lnorm{p_{h,\dt}^N-p_{h,\dt/2}^N}\) \\
\midrule
\(1/40\)
& \(3.408{\times}10^{-4}\)
& \(3.141{\times}10^{-4}\)
& \(3.353{\times}10^{-6}\)
& \(4.956{\times}10^{-6}\)
& \(7.504{\times}10^{-6}\) \\
\(1/80\)
& \(9.323{\times}10^{-5}\) (1.87)
& \(7.007{\times}10^{-5}\) (2.16)
& \(8.431{\times}10^{-7}\) (1.99)
& \(1.248{\times}10^{-6}\) (1.99)
& \(1.888{\times}10^{-6}\) (1.99) \\
\(1/160\)
& \(2.393{\times}10^{-5}\) (1.96)
& \(1.649{\times}10^{-5}\) (2.09)
& \(2.114{\times}10^{-7}\) (2.00)
& \(3.131{\times}10^{-7}\) (2.00)
& \(4.735{\times}10^{-7}\) (2.00) \\
\(1/320\)
& \(6.036{\times}10^{-6}\) (1.99)
& \(4.002{\times}10^{-6}\) (2.04)
& \(5.293{\times}10^{-8}\) (2.00)
& \(7.840{\times}10^{-8}\) (2.00)
& \(1.186{\times}10^{-7}\) (2.00) \\
\bottomrule
\end{tabular}
}
\end{table}

\paragraph{Spatial convergence}

For the spatial test, we fix the final time \(T_f=10^{-3}\) and take \(\dt=T_f/8\).  
The computations are performed on uniform tetrahedral refinements of the unit cube with mesh size \(h\).  This small final time and time step keep the BDF2 error below the spatial error for the electric field.  The convergence rates in Table~\ref{tab:spatial} are computed with respect to \(h\).  The electric field, magnetic field, total pressure, and displacement \(H^1\)-error display the expected second-order behavior.  The pressure \(L^2\)-error is approximately third order, as is typical for the quadratic conforming pressure approximation under the present smooth manufactured solution.

\begin{table}[htbp]
\centering
\caption{Spatial convergence of the five-field monolithic BDF2 method.}
\label{tab:spatial}
\resizebox{\textwidth}{!}{%
\begin{tabular}{cccccc}
\toprule
\(h\)
& \(\lnorm{\E-\E_h}\)
& \(\lnorm{\HH-\HH_h}\)
& \(\hnorm{\uu-\uu_h}\)
& \(\lnorm{\xi-\xi_h}\)
& \(\lnorm{p-p_h}\) \\
\midrule
1/4
& \(5.766{\times}10^{-2}\)
& \(3.047{\times}10^{-1}\)
& \(2.250{\times}10^{-2}\)
& \(4.351{\times}10^{-2}\)
& \(3.835{\times}10^{-2}\) \\
1/8
& \(1.494{\times}10^{-2}\;(1.95)\)
& \(7.882{\times}10^{-2}\;(1.95)\)
& \(5.206{\times}10^{-3}\;(2.11)\)
& \(1.139{\times}10^{-2}\;(1.93)\)
& \(1.118{\times}10^{-2}\;(1.78)\) \\
1/16
& \(3.798{\times}10^{-3}\;(1.98)\)
& \(1.987{\times}10^{-2}\;(1.99)\)
& \(1.236{\times}10^{-3}\;(2.07)\)
& \(2.874{\times}10^{-3}\;(1.99)\)
& \(2.905{\times}10^{-3}\;(1.94)\) \\
1/32
& \(9.824{\times}10^{-4}\;(1.95)\)
& \(4.972{\times}10^{-3}\;(2.00)\)
& \(3.029{\times}10^{-4}\;(2.03)\)
& \(7.199{\times}10^{-4}\;(2.00)\)
& \(7.330{\times}10^{-4}\;(1.99)\) \\
\bottomrule
\end{tabular}
}
\end{table}

\paragraph{Coupled space--time refinement.}

In addition to the separate temporal and spatial tests above, we also consider a coupled refinement path in which the mesh size and time step are refined together.  We take \(T_f=1.0\) and set \(h=\dt=1/M\) for \(M=4,8,16,32\). The convergence rates in Table~\ref{tab:parameter-moderate} are computed with respect to this common refinement parameter. Overall, the results are consistent with the expected second-order space--time convergence, although the displacement and pressure errors exhibit higher observed rates over the range of refinements considered.

\begin{table}[htbp]
\centering
\caption{Coupled space--time convergence test of the five-field monolithic BDF2 method.}
\label{tab:parameter-moderate}
\resizebox{\textwidth}{!}{%
\begin{tabular}{cccccc}
\toprule
\(M\)
& \(\lnorm{\E-\E_h}\)
& \(\lnorm{\HH-\HH_h}\)
& \(\hnorm{\uu-\uu_h}\)
& \(\lnorm{\xi-\xi_h}\)
& \(\lnorm{p-p_h}\) \\
\midrule
4
& \(1.676{\times}10^{-1}\)
& \(2.955{\times}10^{-1}\)
& \(5.011{\times}10^{-2}\)
& \(8.448{\times}10^{-2}\)
& \(1.503{\times}10^{-2}\) \\
8
& \(4.852{\times}10^{-2}\;(1.79)\)
& \(7.345{\times}10^{-2}\;(2.01)\)
& \(7.963{\times}10^{-3}\;(2.65)\)
& \(2.000{\times}10^{-2}\;(2.08)\)
& \(1.867{\times}10^{-3}\;(3.01)\) \\
16
& \(1.270{\times}10^{-2}\;(1.93)\)
& \(1.818{\times}10^{-2}\;(2.01)\)
& \(1.269{\times}10^{-3}\;(2.65)\)
& \(4.873{\times}10^{-3}\;(2.04)\)
& \(2.413{\times}10^{-4}\;(2.95)\) \\
32
& \(3.189{\times}10^{-3}\;(1.99)\)
& \(4.534{\times}10^{-3}\;(2.00)\)
& \(2.356{\times}10^{-4}\;(2.43)\)
& \(1.209{\times}10^{-3}\;(2.01)\)
& \(3.431{\times}10^{-5}\;(2.81)\) \\
\bottomrule
\end{tabular}
}
\end{table}

\paragraph{Parameter robustness.}

We next repeat the coupled refinement test in a more challenging parameter regime.  
Keeping \(\epsilon=\sigma=\mu=\alpha=1.0\), we choose stiff, nearly incompressible mechanics and small permeability:
\[
  G=10^6,\quad \lambda=10^6,\quad c_0=10^{-6},\quad
  \kappa=10^{-2}, \quad
  L=\frac{\sqrt{\sigma\kappa}}{2}=5.0\times10^{-2}.
\]
Thus the coupling coefficient is scaled with the permeability so that the stability condition remains uniformly separated from its limiting value.
To isolate the effect of the material parameters, Table~\ref{tab:parameter-robust} reports the corresponding errors using the same final time and coupled refinement path as in Table~\ref{tab:parameter-moderate}.
The absolute total-pressure error is large in this test because \(\xi=\alpha p-\lambda\diver\uu\) contains the factor \(\lambda=10^6\).  Nevertheless, all variables retain the expected convergence behavior.  Comparing Tables~\ref{tab:parameter-moderate} and \ref{tab:parameter-robust}, we observe that the large bulk modulus, small storage coefficient, and small permeability do not visibly degrade the convergence rates.

\begin{table}[htbp]
\centering
\caption{Parameter-robust convergence test of the five-field monolithic BDF2 method.}
\label{tab:parameter-robust}
\resizebox{\textwidth}{!}{%
\begin{tabular}{cccccc}
\toprule
\(M\)
& \(\lnorm{\E-\E_h}\)
& \(\lnorm{\HH-\HH_h}\)
& \(\hnorm{\uu-\uu_h}\)
& \(\lnorm{\xi-\xi_h}\)
& \(\lnorm{p-p_h}\) \\
\midrule
4
& \(1.676{\times}10^{-1}\)
& \(2.954{\times}10^{-1}\)
& \(1.084{\times}10^{-2}\)
& \(8.304{\times}10^{3}\)
& \(1.793{\times}10^{-2}\) \\
8
& \(4.851{\times}10^{-2}\;(1.79)\)
& \(7.344{\times}10^{-2}\;(2.01)\)
& \(3.024{\times}10^{-3}\;(1.84)\)
& \(2.200{\times}10^{3}\;(1.92)\)
& \(2.653{\times}10^{-3}\;(2.76)\) \\
16
& \(1.270{\times}10^{-2}\;(1.93)\)
& \(1.818{\times}10^{-2}\;(2.01)\)
& \(7.451{\times}10^{-4}\;(2.02)\)
& \(5.098{\times}10^{2}\;(2.11)\)
& \(4.882{\times}10^{-4}\;(2.44)\) \\
32
& \(3.188{\times}10^{-3}\;(1.99)\)
& \(4.534{\times}10^{-3}\;(2.00)\)
& \(1.854{\times}10^{-4}\;(2.01)\)
& \(1.243{\times}10^{2}\;(2.04)\)
& \(1.091{\times}10^{-4}\;(2.16)\) \\
\bottomrule
\end{tabular}
}
\end{table}

\paragraph{Iterative solver performance.}

We finally examine how the convergence of the iterative decoupling depends on the parameters appearing in the contraction factor. In this test, one parameter among \(L\), \(\epsilon\), \(\sigma\), \(\kappa\), \(\dt\), and \(h\), is varied at a time, while the remaining parameters are fixed at the baseline values \eqref{baseline}. The coupling values are always chosen so that \(L<\sqrt{\sigma\kappa}\). For each run, the plotted quantity is \(\lnorm{\nabla e_p^{N,i}}\), where \(e_p^{N,i}=p_h^{N,i}-p_h^N\) denotes the pressure iteration error with respect to the monolithic solution at the final time. This is the quantity directly controlled by \eqref{eq:p-error-contraction}.
The six panels in Figure~\ref{fig:iterative-parameter-sweeps} show the expected geometric decay.  The \(L\)-panel shows that stronger coupling slows the iteration. 
Similarly, increasing \(\epsilon\), \(\sigma\), or \(\kappa\) improves the contraction. The \(\dt\)-panel shows faster decay for smaller time steps, because the denominator \(\sigma+3\epsilon/(2\dt)\) increases as \(\dt\) decreases. The \(h\)-panel shows nearly overlapping curves, which is consistent with the mesh-independent nature of the sufficient contraction factor.

\begin{figure}[h]
\centering
  \includegraphics[width=\textwidth]{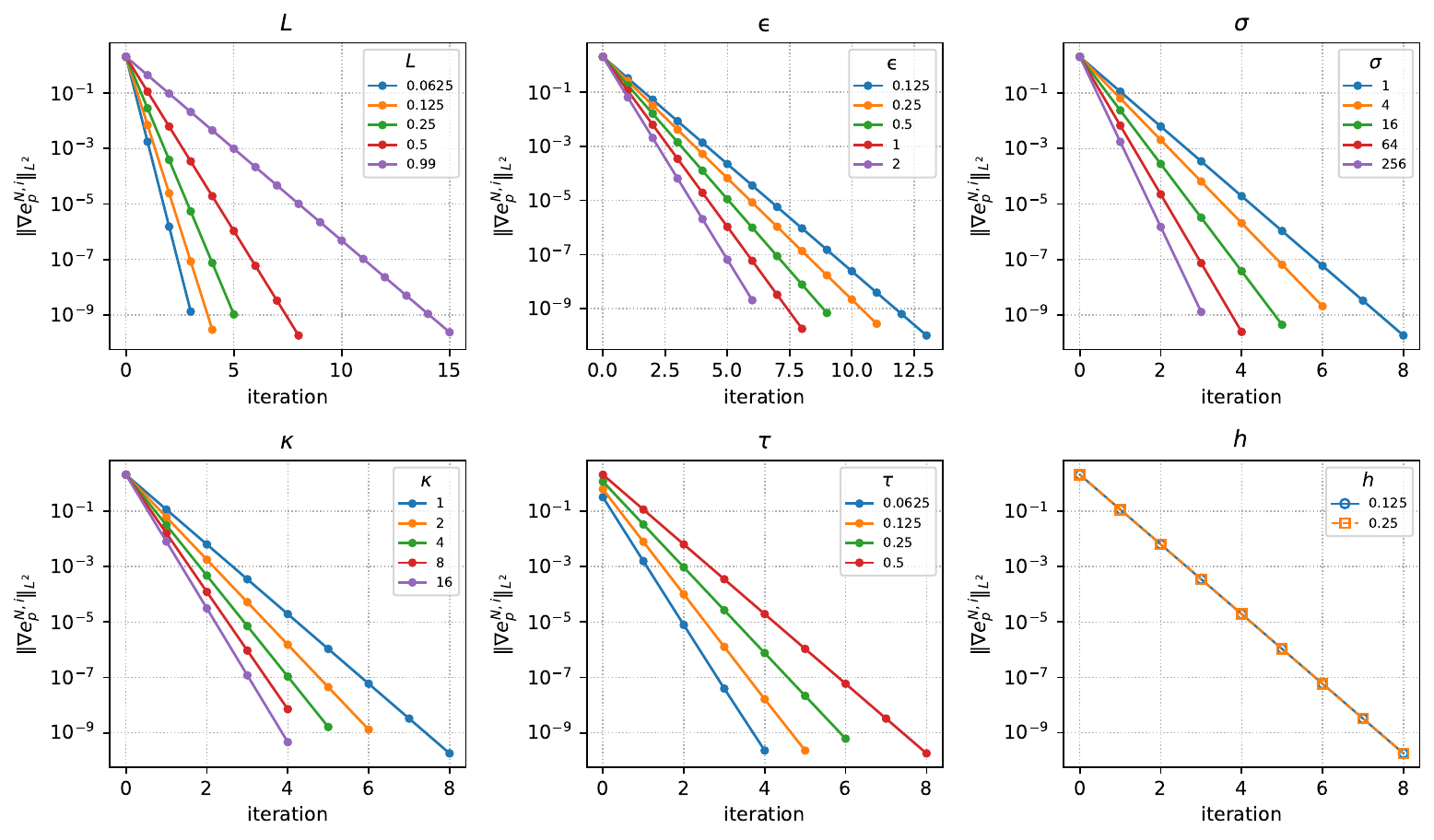}
\caption{Parameter sweeps over \((L,\epsilon,\sigma,\kappa,\dt,h)\) for the iterative decoupling, showing the convergence behavior of the final-time iteration error \(\lnorm{\nabla e_p^{N,i}}\).}
\label{fig:iterative-parameter-sweeps}
\end{figure}

\paragraph{Equivalence and efficiency of the reduced iteration.}
We compare the original iterative decoupling scheme \eqref{eq:iter-e}--\eqref{eq:iter-p} with its reduced electromagnetic form \eqref{eq:riter-e}--\eqref{eq:riter-p} using \(h=1/18\), \(\dt=5.0\times10^{-2}\), \(T_f=1.0\times10^{-1}\), and the parameters in \eqref{baseline}.
Table~\ref{tab:original-reduced-iteration-differences} reports the differences between the corresponding iterates at the final time level.
The results confirm that the two formulations agree up to roundoff error, as predicted by their algebraic equivalence.

\begin{table}[htbp]
\centering
\caption{Differences between the iterates produced by the two iterative decoupling schemes.}
\label{tab:original-reduced-iteration-differences}
\resizebox{\textwidth}{!}{%
\begin{tabular}{cccccc}
\toprule
\(i\)
& \(\lnorm{\E_{h,\mathrm{ori}}^{N,i}-\E_{h,\mathrm{red}}^{N,i}}\)
& \(\lnorm{\HH_{h,\mathrm{ori}}^{N,i}-\HH_{h,\mathrm{red}}^{N,i}}\)
& \(\hnorm{\uu_{h,\mathrm{ori}}^{N,i}-\uu_{h,\mathrm{red}}^{N,i}}\)
& \(\lnorm{\xi_{h,\mathrm{ori}}^{N,i}-\xi_{h,\mathrm{red}}^{N,i}}\)
& \(\lnorm{p_{h,\mathrm{ori}}^{N,i}-p_{h,\mathrm{red}}^{N,i}}\)\\
\midrule
1 & \(2.045{\times}10^{-14}\) & \(1.824{\times}10^{-14}\)
  & \(1.279{\times}10^{-16}\) & \(1.589{\times}10^{-16}\)
  & \(3.116{\times}10^{-16}\) \\
2 & \(2.143{\times}10^{-14}\) & \(1.825{\times}10^{-14}\)
  & \(1.292{\times}10^{-16}\) & \(1.587{\times}10^{-16}\)
  & \(3.126{\times}10^{-16}\) \\
3 & \(2.146{\times}10^{-14}\) & \(1.824{\times}10^{-14}\)
  & \(1.315{\times}10^{-16}\) & \(1.619{\times}10^{-16}\)
  & \(3.245{\times}10^{-16}\) \\
4 & \(2.145{\times}10^{-14}\) & \(1.825{\times}10^{-14}\)
  & \(1.286{\times}10^{-16}\) & \(1.576{\times}10^{-16}\)
  & \(3.117{\times}10^{-16}\) \\
5 & \(2.144{\times}10^{-14}\) & \(1.825{\times}10^{-14}\)
  & \(1.296{\times}10^{-16}\) & \(1.640{\times}10^{-16}\)
  & \(3.194{\times}10^{-16}\) \\
6 & \(2.144{\times}10^{-14}\) & \(1.825{\times}10^{-14}\)
  & \(1.288{\times}10^{-16}\) & \(1.598{\times}10^{-16}\)
  & \(3.120{\times}10^{-16}\) \\
\bottomrule
\end{tabular}
}
\end{table}

We next compare the computational costs of the monolithic, original iterative decoupling, and reduced iterative decoupling schemes. Table~\ref{tab:cpu-cost-comparison} reports the iteration counts, wall-clock times, speedups relative to the monolithic scheme, and absolute differences from the monolithic solution. Both iterative schemes converge in six iterations per time level. The original iterative decoupling scheme achieves speedups of \(2.52\) and \(3.23\), whereas the reduced iterative decoupling scheme increases these values to \(3.40\) and \(4.75\), respectively. Under the stopping criterion specified above, both iterative schemes produce solutions that are numerically indistinguishable from the monolithic solution.

\begin{table}[htbp]
\centering
\caption{Iteration counts, wall-clock times, and speedups of the three schemes on two fine meshes, together with their absolute differences from the monolithic solution.}
\label{tab:cpu-cost-comparison}
\resizebox{\textwidth}{!}{%
\begin{tabular}{cclcccc}
\toprule
\(h\) & dofs & method & avg./max iter. & CPU time (s) & speedup  & abs. diff. \\
\midrule 
&  
& Monolithic approximation
& \(--\)
& \(100.098\)
& \(1.00\)
& \(0.0\) \\
\(1/18\)
& \(860819\)
& Original iterative decoupling
& \(6.00/6\)
& \(39.662\)
& \(2.52\)
& \(3.19{\times}10^{-14}\) \\
& 
& Reduced iterative decoupling
& \(6.00/6\)
& \(29.434\)
& \(3.40\)
& \(3.20{\times}10^{-14}\) \\
\midrule
&  
& Monolithic approximation
& \(--\)
& \(265.958\)
& \(1.00\)
& \(0.0\) \\
\(1/20\)
& \(1177065\)
& Original iterative decoupling
& \(6.00/6\)
& \(82.237\)
& \(3.23\)
& \(3.19{\times}10^{-14}\) \\
& 
& Reduced iterative decoupling
& \(6.00/6\)
& \(55.986\)
& \(4.75\)
& \(3.21{\times}10^{-14}\) \\
\bottomrule
\end{tabular}
}
\end{table}

\subsection{Three-dimensional footing benchmark}

We next test the implementation on the three-dimensional footing problem. The domain is the unit cube \(\OmegaD=(0,1)^3\). The bottom face and the four lateral faces are mechanically clamped, whereas the top face is divided into a loaded region and a traction-free region. Let
\begin{align*}
  & \Gamma_{\rm load}
  =\{(x,y,z):z=1,\ 0.3\le x\le0.7,\ 0.3\le y\le0.7\}, \\ 
  & \Gamma_{\rm free} =\{z=1\}\setminus\overline{\Gamma_{\rm load}}, \\ 
  & \Gamma_D^u = \partial \OmegaD \setminus(\overline{\Gamma_{\rm load}}\cup \overline{\Gamma_{\rm free}}). 
\end{align*}
A vertical traction $t_z$ is applied on \(\Gamma_{\rm load}\). We consider the following boundary conditions.
\begin{align*}
  & (G\nabla\uu-\xi\mathbf I)\bn = -t_z\bm e_3 
      &&\text{on } \Gamma_{\rm load}, \\
  & (G\nabla\uu-\xi\mathbf I)\bn = \bm0 
      &&\text{on } \Gamma_{\rm free}, \\
  & \uu =0 &&\text{on } \Gamma_D^u, \\
  & p =0 &&\text{on } \partial\OmegaD, \\
  & \E\times\bn = \bm0 &&\text{on } \partial\OmegaD.
\end{align*}
Here \(\bm e_3=(0,0,1)^T\). 
All initial fields are zero. 
The mechanical body force $\ff$, and fluid source $g$ are set to zero.
The electromagnetic source is prescribed as the smooth swirl field
\[
  \hh(\bm x)
  =10^{-3}
  \prod_{j=1}^3\sin^2(\pi x_j)
  \begin{bmatrix}
    -(y-1/2)\\ x-1/2\\ z-1/2
  \end{bmatrix}.
\]
For the temporal discretization, we fix $T_f = 0.1$ and use $\dt = 0.01$. For the spatial discretization, we consider the uniform tetrahedral mesh with $h = 1/20$. The physical parameters used in the simulations are summarized in Table \ref{tab:footing-parameters}.
\begin{table}[htbp]
\centering
\caption{Parameters used in the footing problem.}
\label{tab:footing-parameters}
\begin{tabular}{lll}
\toprule
parameter & symbol & value \\
\midrule
electric permittivity & \(\epsilon\) & \(1.0\) \\
electric conductivity & \(\sigma\) & \(1.0\) \\
magnetic permeability & \(\mu\) & \(1.0\) \\
Biot--Willis coefficient & \(\alpha\) & \(1.0\) \\
storage coefficient & \(c_0\) & \(2.0{\times}10^{-3}\) \\
permeability coefficient & \(\kappa\) & \(1.0{\times}10^{-4}\) \\
electrokinetic coupling coefficient & \(L\) & \(5.0{\times}10^{-5}\) \\
shear modulus & \(G\) & \(1.0{\times}10^4\) \\
compressibility modulus & \(\lambda\) & \(4.993{\times}10^6\) \\
vertical traction & \(t_z\) & \(5.0{\times}10^3\) \\
\bottomrule
\end{tabular}
\end{table}



In Figure~\ref{fig:footing-benchmark}, we compare the computed pressure field at the first time step $t = 0.01$ on the center section \(y=0.5\).
The comparison is made between the five-field formulation \eqref{eq:five}, using poroelastic spaces \(({\bm P}_2,P_1,P_2)\) and \(({\bm P}_1,P_1,P_1)\), and the four-field formulation \eqref{eq:four}, using poroelastic spaces \(({\bm P}_2,P_2)\) and \(({\bm P}_1,P_1)\).
The stable five-field discretization \(({\bm P}_2,P_1,P_2)\) produces a smooth pressure profile, whereas the equal-order five-field discretization \(({\bm P}_1,P_1,P_1)\) shows visible oscillations in the pressure field.
The four-field formulation with poroelastic space \(({\bm P}_2,P_2)\) also exhibits pressure oscillations, while the \(({\bm P}_1,P_1)\) result shows more pronounced banded artifacts.  This comparison indicates that the stable five-field choice gives the cleanest pressure response for this nearly incompressible footing test.

\begin{figure}[ht]
\centering
\begin{minipage}{0.24\textwidth}
\centering
\includegraphics[width=\linewidth]{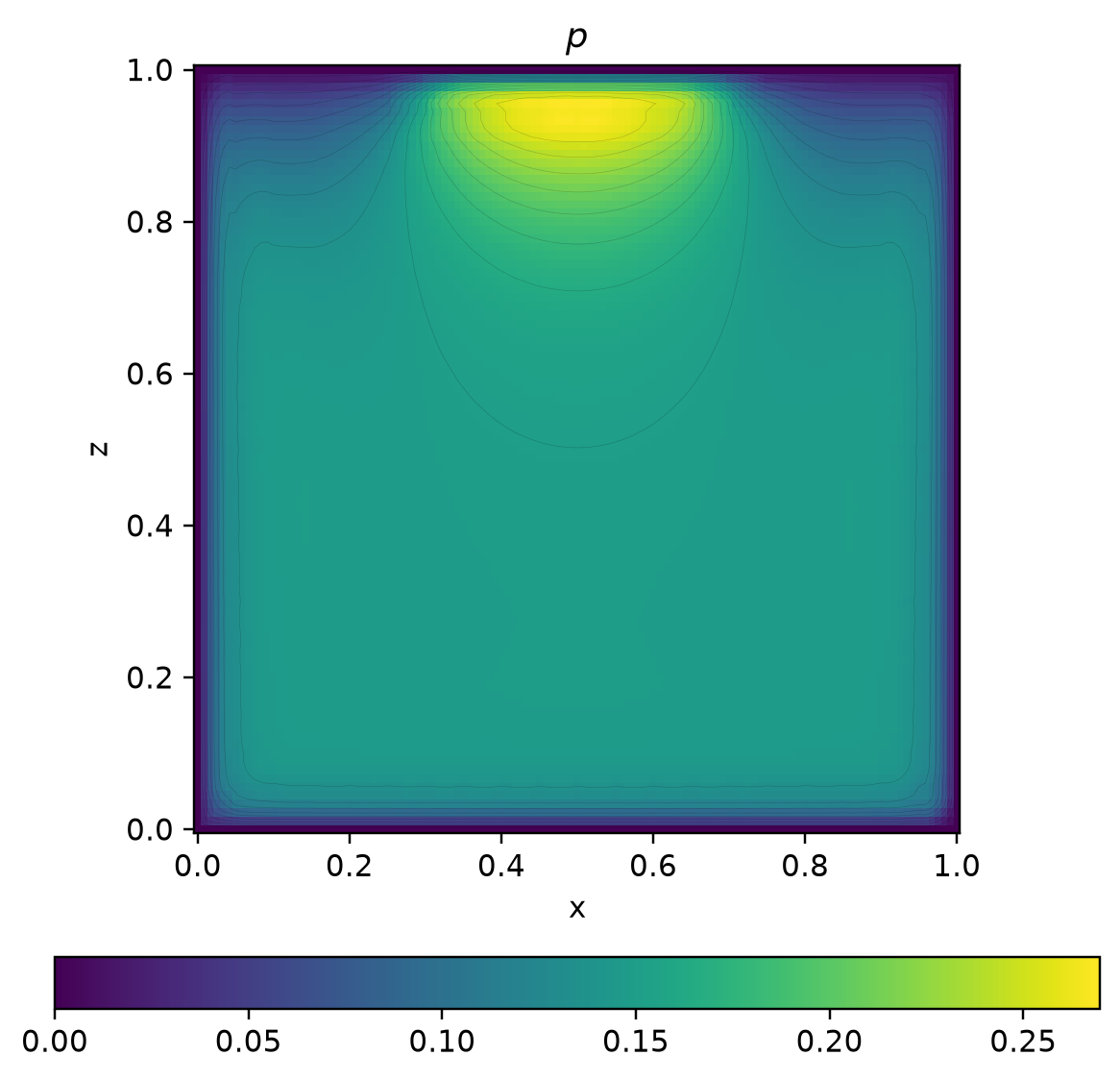}\\[-0.2em]
{\small \(({\bm P}_2,P_1,P_2)\)}
\end{minipage}\hfill
\begin{minipage}{0.24\textwidth}
\centering
\includegraphics[width=\linewidth]{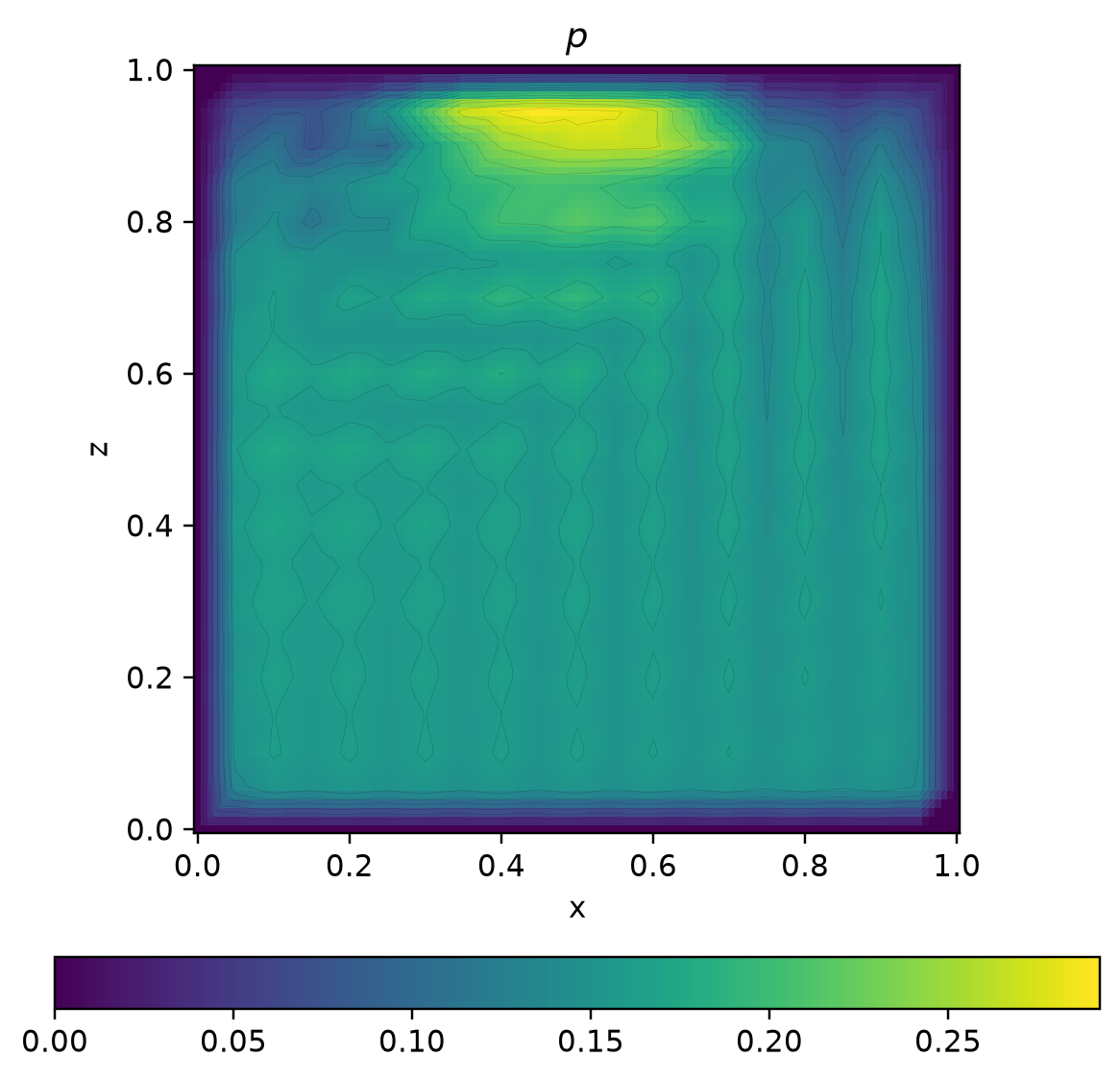}\\[-0.2em]
{\small \(({\bm P}_1,P_1,P_1)\)}
\end{minipage}\hfill
\begin{minipage}{0.24\textwidth}
\centering
\includegraphics[width=\linewidth]{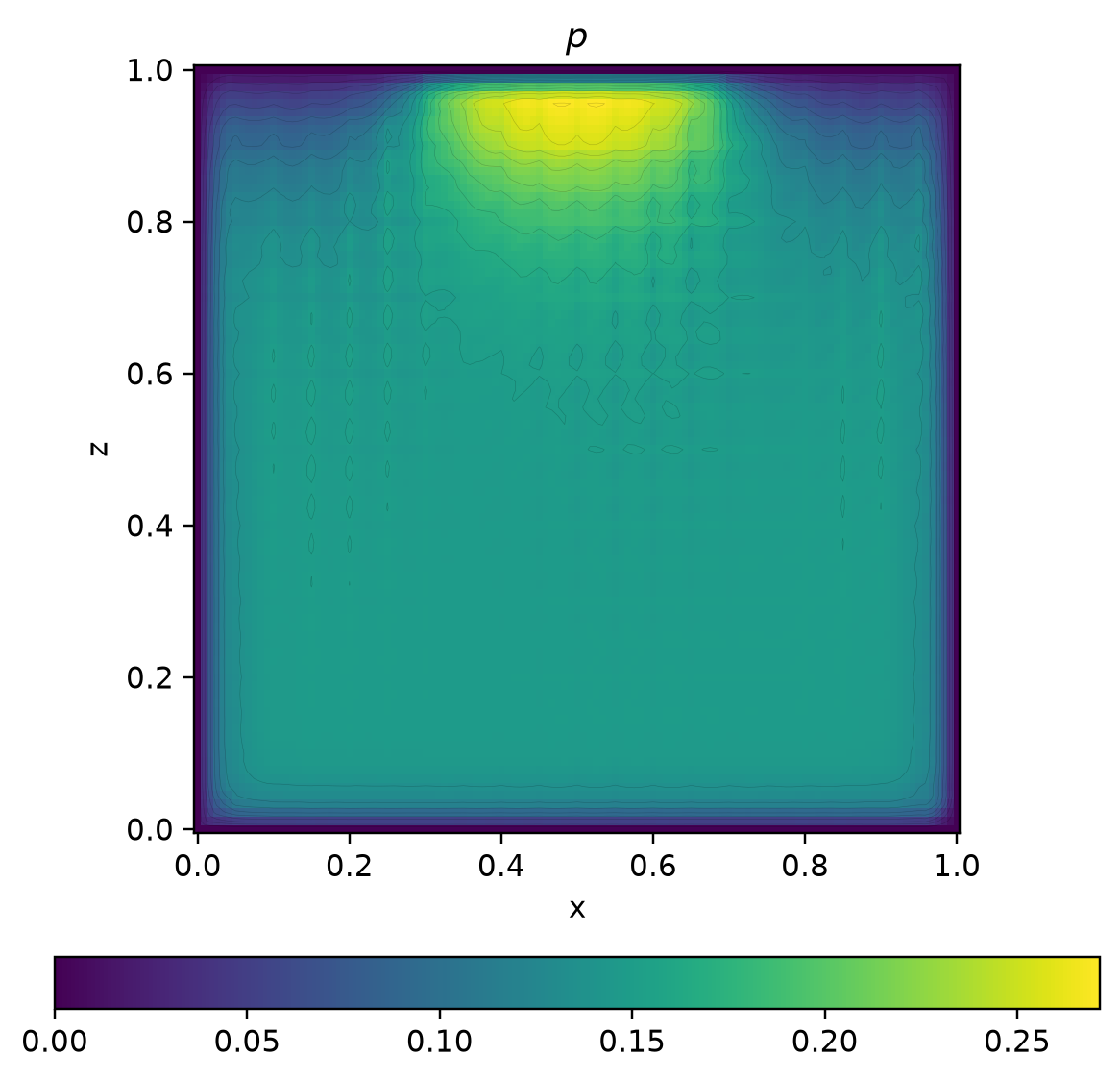}\\[-0.2em]
{\small \(({\bm P}_2,P_2)\)}
\end{minipage}\hfill
\begin{minipage}{0.24\textwidth}
\centering
\includegraphics[width=\linewidth]{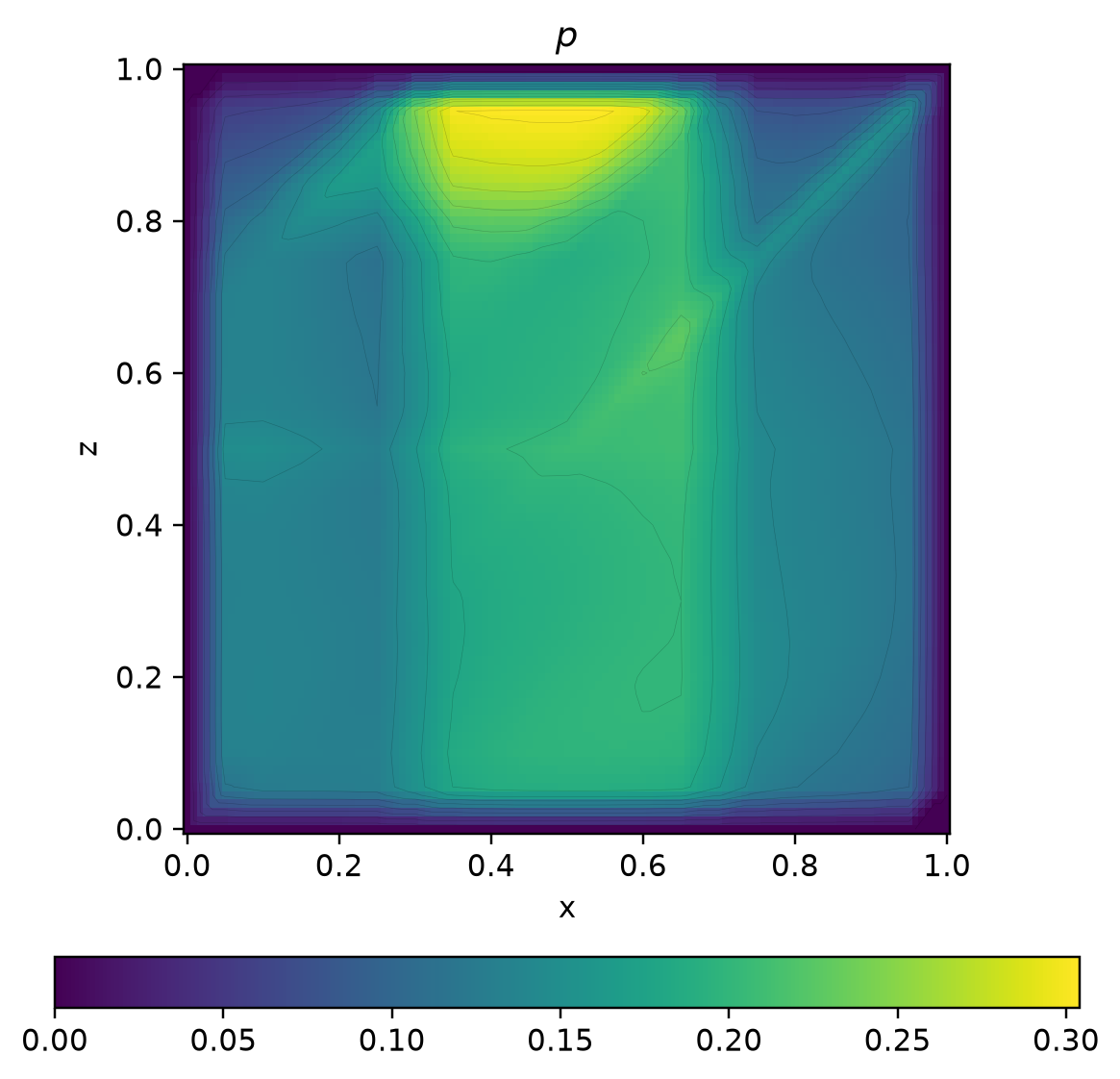}\\[-0.2em]
{\small \(({\bm P}_1,P_1)\)}
\end{minipage}
\caption{Center-section plots of the pressure \(p_h\) on the plane \(y=0.5\) for the footing problem at \(t=0.01\).  The panels compare the five-field formulation with poroelastic spaces \(({\bm P}_2,P_1,P_2)\) and \(({\bm P}_1,P_1,P_1)\) against the four-field formulation with poroelastic spaces \(({\bm P}_2,P_2)\) and \(({\bm P}_1,P_1)\).}
\label{fig:footing-benchmark}
\end{figure}

For the stable five-field computation at \(T_f=0.1\), Figure~\ref{fig:footing-stable-3d} shows the three-dimensional field distributions. The plots illustrate the coupled response of the electromagnetic and poroelastic variables: the electric field is influenced by the pore pressure through the electrokinetic coupling terms, with the strongest response localized near the loaded patch.
\begin{figure}[ht]
\centering
\begin{minipage}[t]{0.32\textwidth}
\centering
\includegraphics[width=\linewidth]{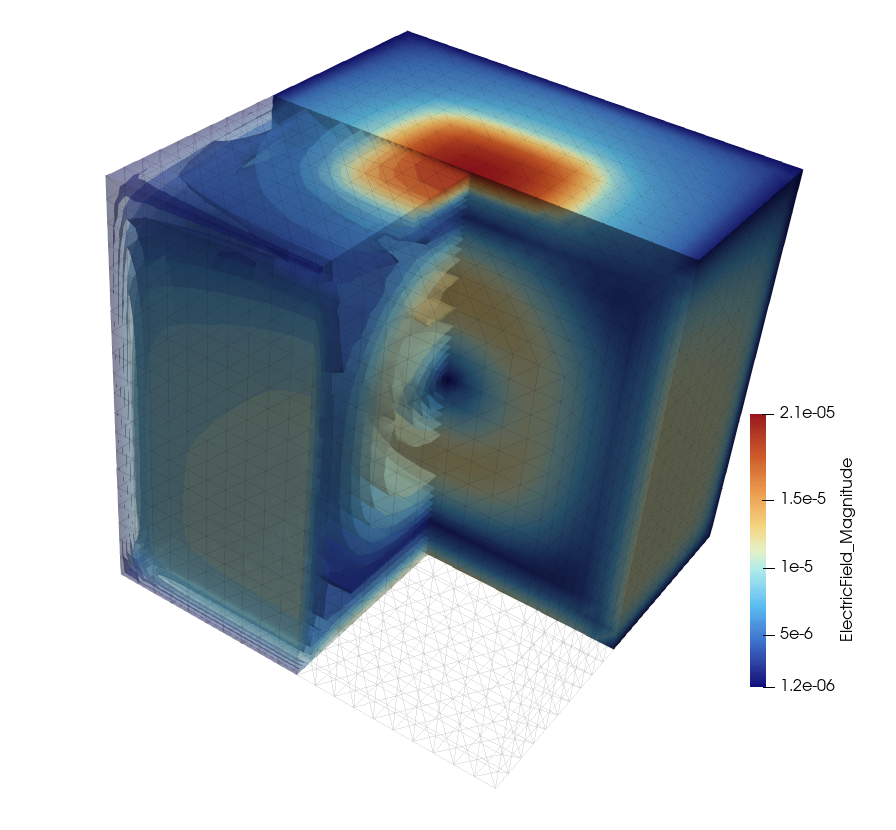}\\[-0.2em]
{\small \(|\E_h|\)}
\end{minipage}\qquad
\begin{minipage}[t]{0.32\textwidth}
\centering
\includegraphics[width=\linewidth]{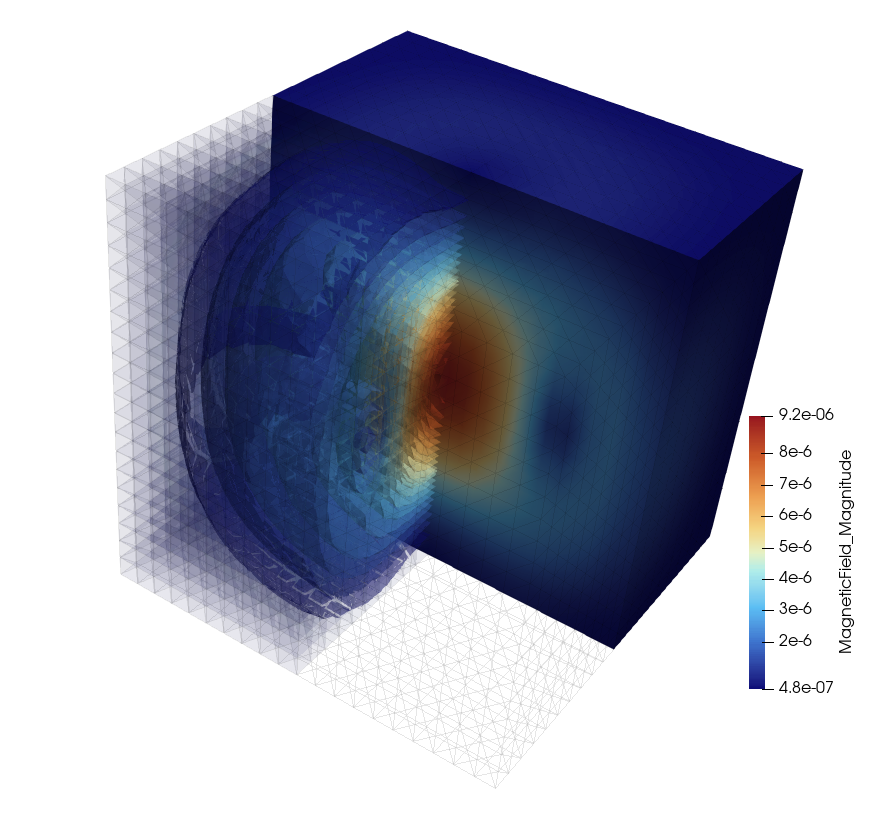}\\[-0.2em]
{\small \(|\HH_h|\)}
\end{minipage}
\vspace{0.6em}
\begin{minipage}[t]{0.32\textwidth}
\centering
\includegraphics[width=\linewidth]{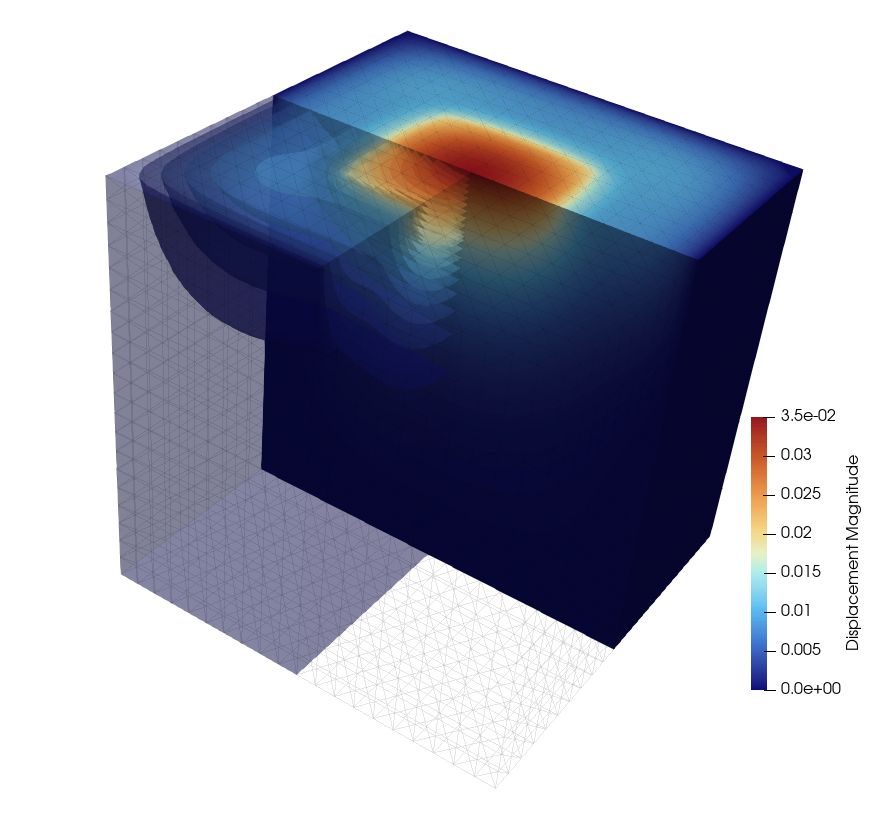}\\[-0.2em]
{\small \(|\uu_h|\)}
\end{minipage}\hfill
\begin{minipage}[t]{0.32\textwidth}
\centering
\includegraphics[width=\linewidth]{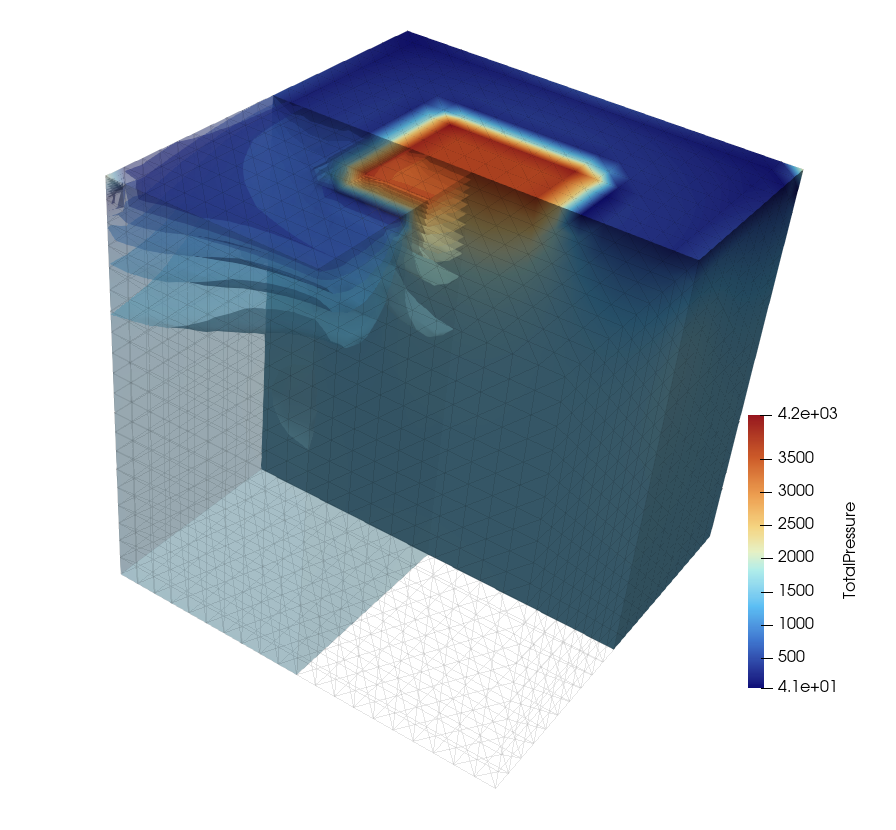}\\[-0.2em]
{\small \(\xi_h\)}
\end{minipage}\hfill
\begin{minipage}[t]{0.32\textwidth}
\centering
\includegraphics[width=\linewidth]{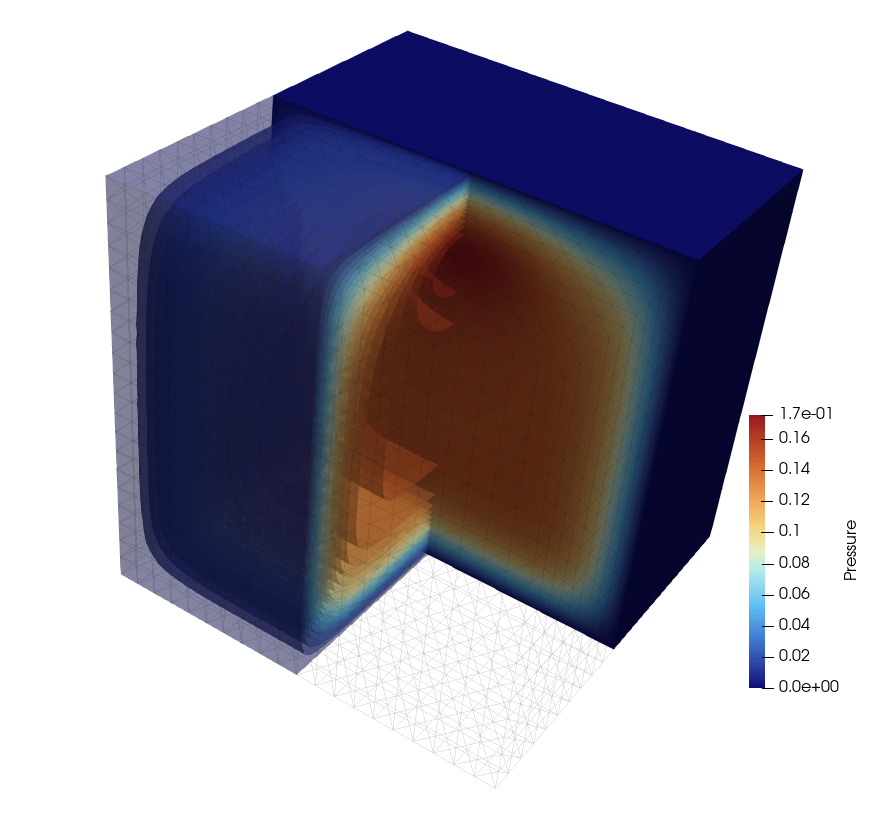}\\[-0.2em]
{\small \(p_h\)}
\end{minipage}
\caption{Three-dimensional visualization of the stable five-field solution at
\(T_f=0.1\).  The top row shows the electromagnetic magnitudes
\(|\E_h|\) and \(|\HH_h|\); the bottom row shows the displacement magnitude
\(|\uu_h|\), total pressure \(\xi_h\), and pore pressure \(p_h\).}
\label{fig:footing-stable-3d}
\end{figure}

\section{Conclusion}
\label{sec:5}
In this work, we introduce a five-field formulation for quasi-static electroporoelasticity and establish the continuous stability estimate under the physical coupling condition \(L<\sqrt{\sigma\kappa}\), providing a stable framework in the nearly incompressible regime.
The resulting BDF2 mixed finite element scheme converges with order \(\mathcal{O}(h^2+\tau^2)\) in the energy norms. 
We further propose an iterative decoupling method that separates the electromagnetic and poroelastic subproblems and converges geometrically to the monolithic solution with a mesh-independent contraction factor. 
We also derive an algebraically equivalent reduced formulation, requiring only one electromagnetic solve per time step, followed by explicit electric-field corrections. 
The numerical results support the theoretical convergence estimates and illustrate the advantage of the stable five-field discretization in suppressing spurious pressure oscillations.

\section*{Acknowledgements}
H. Gu is supported by the National NSF of China No. 123B2016, and Shenzhen University of Information Technology (Grant No. SUIT2025KJ008). M. Cai is supported by the affiliated project award from the Center for Equitable Artificial Intelligence and Machine Learning Systems (CEAMLS) at Morgan State University (project ID 02232301). J. Li was partially supported by the Shenzhen Sci-Tech Fund No. RCJC20200714114556020, Guangdong Basic and Applied Research Fund No. 2023B1515250005, National Center for Applied Mathematics Shenzhen, and SUSTech International Center for Mathematics. 

\bibliographystyle{unsrt}
\bibliography{references}

@article{liu2025splitting,
  author  = {Liu, Xuan and Zou, Yongkui and Zhang, Ran and Cao, Yanzhao and Meir, Amnon J.},
  title   = {Splitting Finite Element Approximations for Quasi-Static Electroporoelasticity Equations},
  journal = {Journal of Scientific Computing},
  volume  = {104},
  number  = {109},
  year    = {2025},
  doi     = {10.1007/s10915-025-03018-5}
}

@article{liu2026multitime,
  author  = {Liu, Xuan and Zou, Yongkui and Meir, Amnon Jacob},
  title   = {Physics-Based Multi-Time Stepping Algorithm for Three-Dimensional Quasi-Static Electroporoelasticity Equations},
  journal = {International Journal of Numerical Analysis and Modeling},
  volume  = {23},
  number  = {2},
  pages   = {252--274},
  year    = {2026},
  doi     = {10.4208/ijnam2026-1011}
}

@article{nedelec1980mixed,
  author  = {N{\'e}d{\'e}lec, Jean-Claude},
  title   = {Mixed Finite Elements in {R}$^3$},
  journal = {Numerische Mathematik},
  volume  = {35},
  pages   = {315--341},
  year    = {1980},
  doi     = {10.1007/BF01396415}
}

@article{biot1962mechanics,
  author  = {Biot, Maurice A.},
  title   = {Mechanics of Deformation and Acoustic Propagation in Porous Media},
  journal = {Journal of Applied Physics},
  volume  = {33},
  number  = {4},
  pages   = {1482--1498},
  year    = {1962},
  doi     = {10.1063/1.1728759}
}

@article{pride1994governing,
  author  = {Pride, Steven R.},
  title   = {Governing Equations for the Coupled Electromagnetics and Acoustics of Porous Media},
  journal = {Physical Review B},
  volume  = {50},
  number  = {21},
  pages   = {15678--15696},
  year    = {1994},
  doi     = {10.1103/PhysRevB.50.15678}
}

@article{haines2006seismoelectric,
  author  = {Haines, Seth S. and Pride, Steven R.},
  title   = {Seismoelectric Numerical Modeling on a Grid},
  journal = {Geophysics},
  volume  = {71},
  number  = {6},
  pages   = {N57--N65},
  year    = {2006},
  doi     = {10.1190/1.2357789}
}

@article{santos2011finite,
  author  = {Santos, Juan E.},
  title   = {Finite Element Approximation of Coupled Seismic and Electromagnetic Waves in Fluid-Saturated Poroviscoelastic Media},
  journal = {Numerical Methods for Partial Differential Equations},
  volume  = {27},
  number  = {2},
  pages   = {351--386},
  year    = {2011},
  doi     = {10.1002/num.20527}
}

@article{santos2012numerical,
  author  = {Santos, Juan E. and Zyserman, Fabio I. and Gauzellino, Patricia M.},
  title   = {Numerical Electro-Seismic Modeling: A Finite Element Approach},
  journal = {Applied Mathematics and Computation},
  volume  = {218},
  number  = {11},
  pages   = {6351--6374},
  year    = {2012},
  doi     = {10.1016/j.amc.2011.12.003}
}

@article{hu2022numerical,
  author  = {Hu, Yu and Meir, Amnon Jacob},
  title   = {Numerical Approximation of Solutions of the Equations of Quasistatic Electroporoelasticity},
  journal = {Numerical Methods for Partial Differential Equations},
  volume  = {38},
  number  = {6},
  pages   = {1929--1947},
  year    = {2022},
  doi     = {10.1002/num.22848}
}

@article{hu2023existence,
  author  = {Hu, Yu and Meir, Amnon Jacob},
  title   = {Existence and Uniqueness of Solutions to the Equations of Quasistatic Electroporoelasticity},
  journal = {Journal of Computational and Applied Mathematics},
  volume  = {431},
  pages   = {115256},
  year    = {2023},
  doi     = {10.1016/j.cam.2023.115256}
}

@book{monk2003finite,
  author    = {Monk, Peter},
  title     = {Finite Element Methods for Maxwell's Equations},
  publisher = {Oxford University Press},
  year      = {2003}
}

@article{kim2011stability,
  author  = {Kim, Jihoon and Tchelepi, Hamdi A. and Juanes, Ruben},
  title   = {Stability and Convergence of Sequential Methods for Coupled Flow and Geomechanics: Fixed-Stress and Fixed-Strain Splits},
  journal = {Computer Methods in Applied Mechanics and Engineering},
  volume  = {200},
  number  = {13--16},
  pages   = {1591--1606},
  year    = {2011},
  doi     = {10.1016/j.cma.2010.12.022}
}

@article{mikelic2013convergence,
  author  = {Mikeli{\'c}, Andro and Wheeler, Mary F.},
  title   = {Convergence of Iterative Coupling for Coupled Flow and Geomechanics},
  journal = {Computational Geosciences},
  volume  = {17},
  pages   = {455--461},
  year    = {2013},
  doi     = {10.1007/s10596-012-9318-y}
}

@article{both2017robust,
  author  = {Both, Jakub W. and Borregales, Mauricio and Nordbotten, Jan M. and Kumar, Kundan and Radu, Florin A.},
  title   = {Robust Fixed Stress Splitting for {Biot}'s Equations in Heterogeneous Media},
  journal = {Applied Mathematics Letters},
  volume  = {68},
  pages   = {101--108},
  year    = {2017},
  doi     = {10.1016/j.aml.2016.12.019}
}

@article{lee2016robust,
  author  = {Lee, Jeonghun J.},
  title   = {Robust Error Analysis of Coupled Mixed Methods for {Biot}'s Consolidation Model},
  journal = {Journal of Scientific Computing},
  volume  = {69},
  number  = {2},
  pages   = {610--632},
  year    = {2016},
  doi     = {10.1007/s10915-016-0210-0}
}

@article{burger2021virtual,
  author  = {B{\"u}rger, Raimund and Kumar, Sarvesh and Mora, David and Ruiz-Baier, Ricardo and Verma, Nitesh},
  title   = {Virtual Element Methods for the Three-Field Formulation of Time-Dependent Linear Poroelasticity},
  journal = {Advances in Computational Mathematics},
  volume  = {47},
  number  = {1},
  pages   = {2},
  year    = {2021},
  doi     = {10.1007/s10444-020-09826-7}
}

@article{gu2023iterative,
  author  = {Gu, Huipeng and Cai, Mingchao and Li, Jingzhi},
  title   = {An Iterative Decoupled Algorithm with Unconditional Stability for {Biot} Model},
  journal = {Mathematics of Computation},
  volume  = {92},
  number  = {341},
  pages   = {1087--1108},
  year    = {2023},
  doi     = {10.1090/mcom/3809}
}

@article{cai2023some,
  author  = {Cai, Mingchao and Gu, Huipeng and Li, Jingzhi and Mu, Mo},
  title   = {Some Optimally Convergent Algorithms for Decoupling the Computation of {Biot}'s Model},
  journal = {Journal of Scientific Computing},
  volume  = {97},
  number  = {2},
  pages   = {48},
  year    = {2023},
  doi     = {10.1007/s10915-023-02365-5}
}

@article{oyarzua2016locking,
  author  = {Oyarz{\'u}a, Ricardo and Ruiz-Baier, Ricardo},
  title   = {Locking-Free Finite Element Methods for Poroelasticity},
  journal = {SIAM Journal on Numerical Analysis},
  volume  = {54},
  number  = {5},
  pages   = {2951--2973},
  year    = {2016},
  doi     = {10.1137/15M1050082}
}

@misc{multiphenicsx,
  author = {Ballarin, Francesco},
  title  = {{multiphenicsx}: Easy Prototyping of Multiphysics Problems in {FEniCSx}},
  year   = {2020},
  url    = {https://multiphenics.github.io/}
}

@misc{BarattaEtal2023,
  author       = {Baratta, Igor A. and Dean, Joseph P. and Dokken, J{\o}rgen S. and Habera, Michal and Hale, Jack S. and Richardson, Chris N. and Rognes, Marie E. and Scroggs, Matthew W. and Sime, Nathan and Wells, Garth N.},
  title        = {{DOLFINx}: The Next Generation {FEniCS} Problem Solving Environment},
  year         = {2023},
  howpublished = {preprint},
  doi          = {10.5281/zenodo.10447666}
}

@article{mcghee2011class,
  author  = {McGhee, Des and Picard, Rainer},
  title   = {A Class of Evolutionary Operators and Its Applications to Electroseismic Waves in Anisotropic, Inhomogeneous Media},
  journal = {Operators and Matrices},
  volume  = {5},
  number  = {4},
  pages   = {665--678},
  year    = {2011},
  doi     = {10.7153/oam-05-48}
}

@article{mcghee2011electroseismic,
  author  = {McGhee, Des and Picard, Rainer},
  title   = {On Electroseismic Waves in Anisotropic, Inhomogeneous Media},
  journal = {GAMM-Mitteilungen},
  volume  = {34},
  number  = {1},
  pages   = {76--83},
  year    = {2011},
  doi     = {10.1002/gamm.201110012}
}

@article{altmann2024semi,
  author  = {Altmann, Robert and Maier, Robert and Unger, Benjamin},
  title   = {Semi-explicit Integration of Second Order for Weakly Coupled Poroelasticity},
  journal = {BIT Numerical Mathematics},
  volume  = {64},
  number  = {20},
  year    = {2024},
  doi     = {10.1007/s10543-024-01021-0},
  url     = {https://doi.org/10.1007/s10543-024-01021-0}
}

@article{brenner1993nonconforming,
  author  = {Brenner, Susanne C.},
  title   = {A Nonconforming Mixed Multigrid Method for the Pure Displacement Problem in Planar Linear Elasticity},
  journal = {SIAM Journal on Numerical Analysis},
  volume  = {30},
  number  = {1},
  pages   = {116--135},
  year    = {1993}
}

\end{document}